\documentclass[10pt]{amsart}
\usepackage{amssymb}
\usepackage{bm}
\usepackage{graphicx}
\usepackage[centertags]{amsmath}
\usepackage{amsfonts}
\usepackage{amsthm}
\linespread{1.18}


\newtheorem{thm}{Theorem}

\newtheorem{lem}[thm]{Lemma}

\newtheorem{claim}[thm]{Claim}
\newtheorem{fact}[thm]{Fact}
\newtheorem{problem}{Problem}
\newtheorem{defn}[thm]{Definition}
\theoremstyle{definition}

\newcommand{\nn}{\mathbb{N}}
\newcommand{\ee}{\varepsilon}

\newcommand{\aaa}{\mathcal{A}}
\newcommand{\bbb}{\mathcal{B}}
\newcommand{\ccc}{\mathcal{C}}
\newcommand{\ddd}{\mathcal{D}}

\newcommand{\fff}{\mathcal{F}}
\newcommand{\hhh}{\mathcal{H}}
\newcommand{\iii}{\mathcal{I}}
\newcommand{\kkk}{\mathcal{K}}
\newcommand{\rrr}{\mathcal{R}}

\newcommand{\www}{\mathcal{W}}
\newcommand{\xxx}{\mathcal{X}}
\newcommand{\yyy}{\mathcal{Y}}

\newcommand{\ct}{2^{<\nn}}
\newcommand{\con}{\smallfrown}
\newcommand{\sg}{\sigma}
\newcommand{\incr}{\mathrm{Incr}}
\newcommand{\decr}{\mathrm{Decr}}
\newcommand{\seg}{\mathfrak{s}}

\begin{document}
\title{Operators whose dual has non-separable range}
\author{Pandelis Dodos}

\address{Department of Mathematics, University of Athens, Panepistimiopolis 157 84, Athens, Greece.}
\email{pdodos@math.uoa.gr}

\thanks{2000 \textit{Mathematics Subject Classification}: Primary 46B28, 46B15; Secondary 05C05, 05D10.}
\thanks{\textit{Key words}: operators, trees, Schauder bases.}
\thanks{Research supported by NSF grant DMS-0903558.}

\maketitle


\begin{abstract}
Let $X$ and $Y$ be separable Banach spaces and $T:X\to Y$ be a bounded linear
operator. We characterize the non-separability of $T^*(Y^*)$ by means of
fixing properties of the operator $T$.
\end{abstract}


\section{Introduction}

The study of fixing properties of certain classes of
operators\footnote[1]{Throughout the paper by the term \textit{operator}
we mean bounded, linear operator.} between separable Banach spaces is a
heavily investigated part of Banach Space Theory which is closely
related to some central questions, most notably with the problem
of classifying, up to isomorphism, all complemented subspaces of classical
function spaces (see \cite{Ro5} for an excellent exposition).

Typically, one has an operator $T:X\to Y$ which is ``large" in a suitable
sense and tries to find a concrete object that the operator $T$ preserves.
Various versions of this problem have been studied in the literature
and several satisfactory answers have been obtained; see, for instance,
\cite{Al,Bou1,Bou2,FGJ,FG,Ga1,Ga2,Pe,Ro1}. Among them, there are
two fundamental results that deserve special attention. The first one is
due to A. Pe{\l}czy\'{n}ski and asserts that every non-weakly compact
operator $T:C[0,1]\to Y$ must fix a copy\footnote[2]{An operator
$T:X\to Y$ is said to \textit{fix a copy} of a Banach space $E$
if there exists a subspace $Z$ of $X$ which is isomorphic to $E$
and is such that $T|_Z$ is an isomorphic embedding.} of $c_0$.
The second result is due to H. P. Rosenthal and asserts that every
operator $T:C[0,1]\to Y$ whose dual $T^*$ has non-separable range
must fix a copy of $C[0,1]$.

The present paper is a continuation of this line of research and is devoted
to the study of the following problem.
\begin{problem} \label{pr1}
Let $X$ and $Y$ be separable Banach spaces and $T:X\to Y$ be an operator
such that $T^*$ has non-separable range. What kind of fixing properties does
the operator $T$ have?
\end{problem}

To state our main results we need to fix some pieces of notation
and introduce some terminology. By $\ct$ we shall denote the
Cantor tree. By $\varphi:\ct\to\nn$ we denote the unique bijection
satisfying $\varphi(s)<\varphi(t)$ if either $|s|<|t|$ or $|s|=|t|=n$ and
$s<_{\mathrm{lex}} t$ (here $<_{\mathrm{lex}}$ stands for the usual
lexicographical order on $2^n$). We recall the following class of basic
sequences (see \cite{ADK1,ADK2,D2}).
\begin{defn} \label{d11}
Let $X$ be a Banach space and $(x_t)_{t\in\ct}$ be a sequence in $X$ indexed
by the Cantor tree. We say that $(x_t)_{t\in\ct}$ is \emph{topologically
equivalent to the basis of James tree} if the following are satisfied.
\begin{enumerate}
\item[(1)] If $(t_n)$ is the enumeration of $\ct$ according to the bijection
$\varphi$, then the sequence $(x_{t_n})$ is a seminormalized basic sequence.
\item[(2)] For every infinite antichain $A$ of $\ct$ the sequence
$(x_t)_{t\in A}$ is weakly null.
\item[(3)] For every $\sg\in 2^\nn$ the sequence $(x_{\sg|n})$ is weak* convergent
to an element $x^{**}_\sg\in X^{**}\setminus X$. Moreover, if $\sg, \tau\in 2^\nn$
with $\sg\neq \tau$, then $x^{**}_\sg\neq x^{**}_\tau$.
\end{enumerate}
\end{defn}
The archetypical example of such a sequence is the standard unit vector basis
of James tree space $JT$ (see \cite{Ja}). There are also classical Banach spaces
having a natural Schauder basis topologically equivalent to the basis of
James tree; the space $C[0,1]$ is an example.

We now introduce the following definition.
\begin{defn} \label{d12}
Let $X$ and $Y$ be Banach spaces and $T:X\to Y$ be an operator. We say that
$T$ \emph{fixes a copy of a sequence topologically equivalent
to the basis of James tree} if the there exists a sequence $(x_t)_{t\in\ct}$
in $X$ such that both $(x_t)_{t\in\ct}$ and $(T(x_t))_{t\in\ct}$ are topologically
equivalent to the basis of James tree.
\end{defn}
We notice that if $T:X\to Y$ fixes a copy of a sequence $(x_t)_{t\in\ct}$
as above, then the topological structure of the weak* closure of
$\{x_t:t\in\ct\}$ in $X^{**}$ is preserved under the action of the
operator $T^{**}$ (see Lemma \ref{l530}). We point out, however,
that metric properties are not necessarily preserved (see \S 5.3).

We are ready to state the first main result of the paper.
\begin{thm} \label{t13}
Let $X$ be a separable Banach space not containing a copy of $\ell_1$,
$Y$ be a separable Banach space and $T:X\to Y$ be an operator. Then
the following are equivalent.
\begin{enumerate}
\item[(i)] The dual operator $T^*:Y^*\to X^*$ of $T$ has non-separable range.
\item[(ii)] The operator $T$ fixes a copy of a sequence topologically
equivalent to the basis of James tree.
\end{enumerate}
\end{thm}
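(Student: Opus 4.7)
For the direction (ii) $\Rightarrow$ (i), I would argue by contradiction: suppose $T$ fixes a sequence $(x_t)_{t\in\ct}$ as in the theorem but $T^*(Y^*)$ is separable. The strategy is to exploit condition (3) of Definition \ref{d11} together with the hypothesis $X\not\supseteq\ell_1$. By Odell--Rosenthal, every element of $X^{**}$ is of Baire class $1$ on $(B_{X^*},w^*)$, and this regularity, combined with the separability of $W:=\overline{T^*(Y^*)}$, should force each $x^{**}_\sigma$ restricted to $W$ to be regular enough that $y^{**}_\sigma=T^{**}(x^{**}_\sigma)$ becomes weak*-sequentially continuous on $B_{Y^*}$, hence $y^{**}_\sigma\in Y$---contradicting (3) for $(T(x_t))$. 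The technical transfer of topological structure from the weak*-closure of $\{x_t\}$ to that of $\{T(x_t)\}$ is exactly what Lemma \ref{l530} (referenced in the excerpt) is set up to do.

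For the substantial direction (i) $\Rightarrow$ (ii), fix $\ee>0$ and an uncountable family $\{y^*_\alpha:\alpha<\omega_1\}\subseteq B_{Y^*}$ with $\|T^*(y^*_\alpha)-T^*(y^*_\beta)\|\geq\ee$ for all $\alpha\neq\beta$. I plan a Cantor-scheme induction on the levels of $\ct$: at each node $t\in\ct$ I select a vector $x_t\in X$ together with an uncountable subfamily $\aaa_t\subseteq\{y^*_\alpha\}$ so that the two children $t\con 0,\,t\con 1$ correspond to disjoint uncountable subfamilies $\aaa_{t\con 0},\aaa_{t\con 1}\subseteq\aaa_t$ whose $T^*$-images remain mutually $\ee$-separated, with the splitting witnessed by the values $\langle T^*(y^*),x_t\rangle$ for $y^*\in\aaa_{t\con i}$. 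The hypothesis $X\not\supseteq\ell_1$ enters through Rosenthal's $\ell_1$-theorem: applied along branches, it yields that $(x_{\sigma|n})$ is weakly Cauchy for each $\sigma\in 2^\nn$, with weak*-limit $x^{**}_\sigma\in X^{**}$. The $\ee$-separation propagated through the scheme forces the map $\sigma\mapsto x^{**}_\sigma$ to be injective, and moreover guarantees $x^{**}_\sigma\notin X$ and $y^{**}_\sigma=T^{**}(x^{**}_\sigma)\notin Y$: were this to fail, only countably many branches could be separated by functionals drawn from an $\ee$-separated family.

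The remaining task is to upgrade this tree so that $(x_t)_{t\in\ct}$ and $(T(x_t))_{t\in\ct}$ simultaneously satisfy conditions (1) (basic sequence under the $\varphi$-enumeration) and (2) (weakly null antichains) of Definition \ref{d11}. For this I would apply tree-Ramsey refinements on perfect subtrees of $\ct$ of the kind developed in \cite{ADK1,ADK2,D2}; passing to a perfect subtree that is order-isomorphic to $\ct$ preserves uncountably many branches together with the distinguishing functionals, so condition (3) is not lost.

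The main obstacle is the coordination of the $X$-side and the $Y$-side under $T$. The Ramsey refinements must be applied to the pair $(x_t,T(x_t))$ simultaneously, since a perfect subtree on which $(x_t)$ is basic and has weakly null antichains need not work on the $Y$-side, and vice versa; this requires doing each refinement step with a joint coloring. The second delicate point is the preservation of the ``$\notin Y$'' property for the branch limits $y^{**}_\sigma$ through every refinement, which is ultimately secured by the $\ee$-separation hard-coded into the original Cantor-scheme construction.
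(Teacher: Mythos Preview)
Your plan for (i)$\Rightarrow$(ii) contains a genuine gap at the point where you write ``applied along branches, it yields that $(x_{\sigma|n})$ is weakly Cauchy for each $\sigma\in 2^\nn$.'' Rosenthal's $\ell_1$-theorem does \emph{not} say this: it gives a weakly Cauchy \emph{subsequence} of any given bounded sequence, not that the sequence itself is weakly Cauchy. Your Cantor-scheme builds $(x_t)_{t\in\ct}$ one node at a time, and there is no mechanism ensuring that the branch sequences $(x_{\sigma|n})_n$ are weak* convergent. Fixing one branch requires passing to a subsequence, i.e.\ thinning the chain $\{\sigma|n\}$; but you must do this coherently for \emph{continuum many} branches simultaneously, and a naive fusion runs into exactly the ``diagonal'' difficulty that the paper's machinery is designed to overcome. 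Relatedly, your claim that $\ee$-separation of the $T^*(y^*_\alpha)$ forces injectivity of $\sigma\mapsto x^{**}_\sigma$ and $x^{**}_\sigma\notin X$ is not justified: the separation lives on the functional side, and there is no direct link to the branch limits until convergence along branches is already secured.

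The paper takes an entirely different route that sidesteps this obstacle. Instead of a splitting scheme, it encodes the problem in families of infinite subsets of $\nn$: fixing a dense sequence $(d_n)$ in $B_X$, it considers the hereditary families $\ddd,\rrr$ of sets $L$ along which $(d_n)_{n\in L}$, resp.\ $(T d_n)_{n\in L}$, is weak* convergent, and the subfamilies $\ddd_0,\rrr_0$ where the limit is $0$. Debs' effective version of Bourgain--Fremlin--Talagrand yields a Borel cofinal $\fff\subseteq\ddd\cap\rrr$; the key step (Lemma \ref{l318}) shows that $\aaa=\fff\setminus\rrr_0$ is not countably generated in $\ddd_0^\perp$---this is where non-separability of $T^*(Y^*)$ is used---so the dichotomy of \cite{DK} produces a \emph{perfect Lusin gap} $\sigma\mapsto(A_\sigma,B_\sigma)$ inside $(\aaa,\ddd_0)$. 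The point is that each $A_\sigma$ lies in $\ddd\cap\rrr$ \emph{by construction}, so weak* convergence along all branches is built in from the start; the Lusin-gap combinatorics then yield injectivity and accumulation at $0$ (Lemmas \ref{l321}, \ref{l322}), and only afterwards are the tree-Ramsey tools of \cite{ADK1} (Theorem \ref{t211} on increasing/decreasing antichains) applied to secure conditions (1) and (2). Your proposed Ramsey refinements correspond to this last phase, but the paper reaches it only after the descriptive-set-theoretic detour has already handled the convergence problem you underestimate.

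For (ii)$\Rightarrow$(i), your outline via Odell--Rosenthal is both more complicated than needed and not clearly correct: Baire-1 regularity of $x^{**}_\sigma|_W$ does not yield weak*-sequential continuity of $y^{**}_\sigma$. The paper's Lemma \ref{l313} gives a short Baire-category argument (not using the $\ell_1$ hypothesis at all): if $T^*(Y^*)$ were separable, cover $2^\nn$ by the closed sets $F_{i,m,k}=\{\sigma: y^*_i(T x_{\sigma|n})\geq 2^{-m}\ \forall n\geq k\}$ and obtain a basic open set on which some fixed $y^*_{i_0}\circ T$ stays bounded below, contradicting weak nullness along antichains.
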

The assumption in Theorem \ref{t13} that the space $X$ does not contain a
copy of $\ell_1$ is not redundant. Indeed, if $Q:\ell_1\to JT$ is a quotient map,
then the dual operator $Q^*$ of $Q$ has non-separable range yet $Q$ is strictly
singular\footnote[3]{Actually, every operator $T:\ell_1\to JT$ is strictly
singular since every infinite-dimensional subspace of $JT$ contains a copy
of $\ell_2$ (see \cite{Ja}).} and fixes no copy of a sequence topologically
equivalent to the basis of James tree. Observe, however, that in this case there
exists a bounded sequence $(x_t)_{t\in\ct}$ in $\ell_1$ such that its image
$(Q(x_t))_{t\in\ct}$ is topologically equivalent to the basis of James tree.
On the other hand, if $Q:\ell_1\to C[0,1]$ is a quotient map, then $Q$
fixes a copy of $\ell_1$. Our second main result shows that this phenomenon
holds true in full generality.
\begin{thm} \label{t14}
Let $X$ be a separable Banach space containing a copy of $\ell_1$,
$Y$ be a separable Banach space and $T:X\to Y$ be an operator. Then
the following are equivalent.
\begin{enumerate}
\item[(i)] The dual operator $T^*:Y^*\to X^*$ of $T$ has non-separable range.
\item[(ii)] Either the operator $T$ fixes a copy of $\ell_1$ or there exists a bounded
sequence $(x_t)_{t\in\ct}$ in $X$ such that its image $(T(x_t))_{t\in\ct}$ is topologically
equivalent to the basis of James tree.
\end{enumerate}
\end{thm}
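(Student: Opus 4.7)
\emph{(ii) $\Rightarrow$ (i).} This splits into two cases. If $T$ fixes a subspace $Z_0\cong\ell_1$ of $X$, the Hahn--Banach theorem shows that the restriction map $T^*(Y^*)\to Z_0^*$ is onto, and since $Z_0^*\cong\ell_\infty$ is non-separable so is $T^*(Y^*)$. Otherwise there is a bounded $(x_t)_{t\in\ct}$ in $X$ with $(T(x_t))_{t\in\ct}$ topologically equivalent to the basis of James tree. The branch weak$^*$-limits $y^{**}_\sg:=w^{*}\text{-}\lim_n T(x_{\sg|n})$ are then distinct elements of $Y^{**}\setminus Y$, and each equals $T^{**}(x^{**}_\sg)$ for some weak$^*$-cluster point $x^{**}_\sg\in X^{**}$ of the bounded sequence $(x_{\sg|n})$. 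Appealing to Lemma \ref{l530} (which transfers the topological structure of the weak$^*$-closure of $\{x_t\}$ through $T^{**}$) forces $T^*(Y^*)\cong\overline{T(X)}^*$ to be non-separable.

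\emph{(i) $\Rightarrow$ (ii).} Assume $T^*(Y^*)$ is non-separable and that $T$ does not fix $\ell_1$; we must produce a bounded $(x_t)_{t\in\ct}$ in $X$ with $(T(x_t))_{t\in\ct}$ topologically equivalent to the basis of James tree. The crucial observation, obtained by combining Rosenthal's $\ell_1$-theorem with the stability of $\ell_1$-equivalence under bounded preimages (if $(T(x_n))$ were $\ell_1$-equivalent, the inequality $\|\sum a_ix_i\|\ge \|T\|^{-1}\|\sum a_iT(x_i)\|$ would force $T$ to fix $\ell_1$ on the span of a bounded sequence $(x_n)$), is that $T$ not fixing $\ell_1$ is equivalent to the assertion that every bounded sequence $(x_n)\subset X$ admits a subsequence $(x_{n_k})$ with $(T(x_{n_k}))$ weakly Cauchy in $Y$. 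So although $X$ contains $\ell_1$, the operator $T$ uniformly kills $\ell_1$-behaviour in the image.

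The plan is to mimic, at the $Y$-image level, the construction from Theorem \ref{t13}. The non-separability of $T^*(Y^*)$ first provides an uncountable $\ee$-separated family $\{T^*(y^*_\alpha)\}_{\alpha<\omega_1}\subset X^*$; by a standard Ramsey/coding extraction this is reduced to a countable pool of candidate vectors in $X$ together with an uncountable family of functionals in $Y^*$ witnessing the $\ee$-separation on that pool. A sequence $(x_t)_{t\in\ct}$ is then built by induction on the level $|t|$: at every stage one passes to a subsequence of the pool to ensure (a) the images along each emerging branch are weakly Cauchy in $Y$ (invoking the failure of $T$ to fix $\ell_1$), (b) the images along antichains are weakly null in $Y$ (a further standard subsequence extraction), and (c) different branches produce distinct weak$^*$-limits in $Y^{**}$ (using the uncountable $\ee$-separated family). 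The resulting $(x_t)_{t\in\ct}$ is bounded in $X$ and its image satisfies the conditions (1)--(3) of Definition \ref{d11}, giving alternative (b) in (ii).

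The principal difficulty is carrying out this recursion with all three requirements preserved simultaneously. Maintaining (a) and (b) across every level of $\ct$ requires Rosenthal-type subsequence extractions at each level, while preserving (c) requires a careful coding so that the (necessarily countably many) passages to subsequences do not collapse the distinctness of the $2^{\aleph_0}$ branch weak$^*$-limits encoded by the $\ee$-separated family. The combinatorial framework of Ramsey-type colourings on the Cantor tree developed in \cite{ADK1,ADK2,D2} is the natural tool. A secondary subtlety worth emphasizing is that one cannot in general restrict $T$ to an $\ell_1$-free subspace of $X$ (such a subspace may fail to exist, e.g.\ when $X=\ell_1$); the construction must therefore proceed with bounded sequences in $X$ itself, controlling only the behaviour of their $T$-images.
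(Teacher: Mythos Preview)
Your argument for (ii)$\Rightarrow$(i) in the James-tree case is broken. You invoke Lemma~\ref{l530}, but that lemma requires \emph{both} $(x_t)$ and $(T(x_t))$ to be topologically equivalent to the basis of James tree; here only the image sequence is assumed to have this property, so the hypothesis fails. Moreover, even if Lemma~\ref{l530} did apply, its conclusion is that $T^{**}|_{\mathcal K}$ is a weak$^*$ homeomorphism, which says nothing directly about the non-separability of $T^*(Y^*)$; and the claimed identification $T^*(Y^*)\cong\overline{T(X)}^*$ is not correct (one only gets $T^*(Y^*)$ as a bounded injective image of $\overline{T(X)}^*$, which need not preserve non-separability). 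The paper handles this direction with Lemma~\ref{l313}, a short Baire-category argument that uses only the boundedness of $(x_t)$ and the James-tree structure of $(T(x_t))$; that is the lemma you want.

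For (i)$\Rightarrow$(ii) you correctly isolate the dichotomy (either some image subsequence is $\ell_1$-equivalent, in which case $T$ fixes $\ell_1$, or every image subsequence has a weak-Cauchy refinement), and this is exactly how the paper splits into cases. But what you then offer is a plan, not a proof: you describe an inductive scheme with Ramsey-type extractions and explicitly flag the ``principal difficulty'' of making (a), (b), (c) cohere, without resolving it. The paper's route is substantially different and more specific. It fixes a dense sequence $(d_n)$ in $B_X$, encodes the problem via hereditary families $\mathcal R,\mathcal R_0\subseteq[\nn]^\infty$ (weak$^*$-convergent and weakly-null image subsequences, respectively), invokes Debs' theorem to get a Borel cofinal subfamily $\mathcal F'\subseteq\mathcal R$, and then applies the perfect-Lusin-gap dichotomy (Theorem~\ref{t28}) to the pair $(\mathcal F'\setminus\mathcal R_0,\,\mathcal R_0)$. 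The non-separability of $T^*(Y^*)$ is used precisely to rule out the ``countably generated'' alternative, yielding a perfect Lusin gap; from there the construction of $(x_t)$ follows the template of Theorem~\ref{t13} using Theorem~\ref{t211} and Theorem~\ref{t212}. None of this machinery (M-families, Lusin gaps, Debs' theorem) appears in your sketch, and your $\varepsilon$-separated-family-plus-level-induction outline does not obviously converge to a proof.
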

The paper is organized as follows. In \S 2 we gather some background
material. In \S 3 we give the proof of Theorem \ref{t13} while
in \S 4 we give the proof of Theorem \ref{t14}. Finally, in \S 5
we make some comments.


\section{Background material}

Our general notation and terminology is standard as can be found, for instance,
in \cite{Kechris} and \cite{LT}. For every Banach space $X$ by $B_X$ we denote
the closed unit ball of $X$. By $\nn=\{0,1,2,...\}$ we shall denote the
natural numbers. If $S$ is a countable infinite set, then by $[S]^{\infty}$
we shall denote the set of all infinite subsets of $S$. Notice that $[S]^{\infty}$
is a $G_\delta$, hence Polish, subspace of $2^S$.

We will frequently need to compute the descriptive set-theoretic complexity
of various sets and relations. To this end, we will use the ``Kuratowski-Tarski
algorithm". We will assume that the reader is familiar with this classical method.
For more details we refer to \cite[page 353]{Kechris}.

\subsection{Subtrees of the Cantor tree}

As we have already mentioned, by $\ct$ we shall denote the Cantor tree;
i.e. $\ct$ is the set of all finite sequences of $0$'s and $1$'s
(the empty sequence is denoted by $\varnothing$ and is included in $\ct$).
We view $\ct$ as a tree equipped with the (strict) partial order $\sqsubset$
of extension. The \textit{length} of a node $t\in\ct$ is defined to be
the cardinality of the set $\{s\in\ct:s \sqsubset t\}$ and is denoted
by $|t|$. Two nodes $s,t\in\ct$ are said to be \textit{comparable} if either
$s\sqsubseteq t$ or $t\sqsubseteq s$. Otherwise, $s$ and $t$ are said
to be \textit{incomparable}. A subset of $\ct$ consisting of pairwise comparable
nodes is said to be a \textit{chain}, while a subset of $\ct$ consisting
of pairwise incomparable nodes is said to be an \textit{antichain}.
For every $s,t\in\ct$ we let $s\wedge t$ be the $\sqsubset$-maximal node $w$
of $\ct$ with $w\sqsubseteq s$ and $w\sqsubseteq t$. If $s,t\in\ct$ are
incomparable with respect to $\sqsubseteq$, then we write $s\prec t$ provided
that $(s\wedge t)^{\con}0 \sqsubseteq s$ and $(s\wedge t)^{\con}1\sqsubseteq t$.
For every $\sg\in 2^\nn$ and every $n\in\nn$ with $n\geq 1$ we set
$\sg|n=(\sg(0),...,\sg(n-1))$ while $\sg|0=\varnothing$.

\subsubsection{Downwards closed subtrees}

A non-empty subset $R$ of $\ct$ is said to be a \textit{downwards closed subtree}
if for every $t\in R$ and every $s\in\ct$ with $s\sqsubseteq t$ we have that
$s\in R$. The \textit{body} of a downwards closed subtree $R$ of $\ct$ is defined
to be the set $\{\sg\in 2^\nn:\sg|n\in R \ \forall n\in\nn\}$ and is denoted
by $[R]$. If $A$ is a non-empty subset of $\ct$, then the \textit{downwards closure}
of $A$ is defined to be the set $\{s\in \ct: \exists t\in A \text{ with }
s\sqsubseteq t\}$ and is denoted by $\hat{A}$; notice that $\hat{A}$
is a downwards closed subtree.

\subsubsection{Dyadic subtrees}

A subset $D$ of $\ct$ is said to be a \textit{dyadic subtree} if $D$
can be written in the form $\{s_t:t\in\ct\}$ so that for every $t_0, t_1\in\ct$
we have $t_0\sqsubset t_1$ (respectively $t_0\prec t_1$) if and only if
$s_{t_0} \sqsubset s_{t_1}$ (respectively $s_{t_0}\prec s_{t_1}$).
It is easy to see that such a representation of $D$ as $\{s_t: t\in\ct\}$ is unique.
In the sequel when we write $D=\{s_t:t\in\ct\}$, where $D$ is a dyadic subtree,
we will assume that this is the canonical representation of $D$ described above.
The notion of length and the binary relation $\wedge$ can be relativized
to any dyadic subtree $D$. In particular, if $s\in D$, then we let $|s|_D$
be the cardinality of the set $\{s'\in D: s'\sqsubset s\}$; moreover,
for every $s,s'\in D$ we let $s\wedge_D s'$ be the $\sqsubset$-maximal node
$w\in D$ such that $w\sqsubseteq s$ and $w\sqsubseteq s'$. Notice that if
$\{s_t:t\in\ct\}$ is the canonical representation of $D$, then for every
$t,t'\in\ct$ we have $|s_t|_D=|t|$ and $s_t\wedge_D s_{t'}=s_{t\wedge t'}$.

\subsubsection{Regular dyadic subtrees}

A dyadic subtree $D=\{s_t:t\in\ct\}$ is said to be \textit{regular}
if for every $t_0, t_1\in\ct$ we have $|t_0|=|t_1|$ if and only if
$|s_{t_0}|=|s_{t_1}|$; equivalently, the dyadic subtree $D=\{s_t:t\in\ct\}$ is regular
if for every $n\in\nn$ there exists $m\in\nn$ such that $\{s_t:t\in 2^n\}\subseteq 2^m$.

\subsection{Families of infinite sets and related combinatorics}

Throughout this subsection $S$ will be a countable infinite set. A family
$\aaa\subseteq [S]^{\infty}$ is said to be \textit{hereditary} if for
every $A\in\aaa$ and every $A'\in [A]^{\infty}$ we have that $A'\in \aaa$.

Given $A, B\in [S]^{\infty}$ we write $A\subseteq^* B$ if the set
$A\setminus B$ is finite, while we write $A\perp B$ if the set
$A\cap B$ is finite. Two families $\aaa, \bbb\subseteq [S]^{\infty}$
are said to be \textit{orthogonal} if $A\perp B$ for every $A\in\aaa$
and every $B\in\bbb$. A family $\aaa$ is said to be \textit{countably generated}
in a family $\bbb$ if there exists a sequence $(B_n)$ in $\bbb$ such that
for every $A\in\aaa$ there exists $n\in\nn$ with $A\subseteq^* B_n$.
A subfamily $\bbb$ of a family $\aaa$ is said to be \textit{cofinal} in
$\aaa$ if for every $A\in\aaa$ there exists $B\in [A]^{\infty}$ with $B\in\bbb$.

For every $\aaa\subseteq [S]^{\infty}$ we set
\begin{equation} \label{e1}
\aaa^\perp=\{ B\in [S]^{\infty}: B\perp A \text{ for every } A\in\aaa\}.
\end{equation}
The family $\aaa^\perp$ is called the \textit{orthogonal} of $\aaa$.
Clearly $\aaa^\perp$ is hereditary. Moreover, it is invariant
under finite changes; that is, if $B\in\aaa^\perp$ and $C\in [S]^{\infty}$
are such that $B\bigtriangleup C$ is finite, then $C\in\aaa^\perp$.

We recall the following class of hereditary families introduced in
\cite{DK}.
\begin{defn} \label{d25}
We say that a hereditary family $\aaa$ of infinite subsets of $S$ is an
\emph{M-family} if for every sequence $(A_n)$ in $\aaa$ there exists
$A\in\aaa$ whose all but finitely many elements are in $\bigcup_{n\geq k} A_n$
for every $k\in\nn$.
\end{defn}
The notion of an M-family is the ``hereditary" analogue of the notion of
a \textit{happy family} (also known as \textit{selective co-ideal}) introduced
by A. R. D. Mathias \cite{Ma}. We isolate, for future use, the following easy
fact (see \cite[Fact 3]{DK}).
\begin{fact} \label{f26}
Let $\aaa\subseteq [S]^{\infty}$ be a hereditary family. Then the following
are equivalent.
\begin{enumerate}
\item[(i)] The family $\aaa$ is an M-family.
\item[(ii)] For every sequence $(A_n)$ in $\aaa$ there exists $A\in\aaa$
such that $A\cap A_n\neq \varnothing$ for infinitely many $n\in\nn$.
\end{enumerate}
\end{fact}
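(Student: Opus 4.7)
The direction $(i) \Rightarrow (ii)$ is immediate. Let $A \in \aaa$ witness $(i)$ for $(A_n)$. For every $k \in \nn$ the set $A \setminus \bigcup_{n \geq k} A_n$ is finite while $A$ itself is infinite, so there exists $n \geq k$ with $A \cap A_n \neq \varnothing$. Letting $k$ vary shows $\{n : A \cap A_n \neq \varnothing\}$ is unbounded, hence infinite.

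For the direction $(ii) \Rightarrow (i)$, given $(A_n)$ in $\aaa$ I apply $(ii)$ to obtain $C \in \aaa$ meeting $A_n$ for $n$ in an infinite set $N$, and split into cases based on the size of the intersections. If $C \cap A_n$ is infinite for infinitely many $n \in N$, enumerate these as $n_0 < n_1 < \cdots$: since $\aaa$ is hereditary and $C \in \aaa$, the full family $[C]^\infty$ is contained in $\aaa$ and is trivially an M-family on $C$; applying its defining property to the sequence $(C \cap A_{n_j})_j$ of infinite subsets of $C$ yields $D \in [C]^\infty \subseteq \aaa$ with $D \setminus \bigcup_{j \geq k}(C \cap A_{n_j})$ finite for every $k$. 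Since $\bigcup_{j \geq k}(C \cap A_{n_j}) \subseteq \bigcup_{n \geq n_k} A_n$, this gives $D \setminus \bigcup_{n \geq m} A_n$ finite for every $m$, as required.

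If instead $C \cap A_n$ is finite for all but finitely many $n \in N$, I aim to extract an infinite $D \subseteq C \cap \bigcup_n A_n$ with $D \cap A_n$ finite for every $n$. Indeed, for such a $D$ one has $D \subseteq \bigcup_n A_n$, so $D \setminus \bigcup_{n \geq m} A_n \subseteq \bigcup_{n < m}(D \cap A_n)$ is a finite union of finite sets, hence finite, and $D \in \aaa$ by hereditariness, delivering $(i)$. Such a $D$ is found by pigeonholing inside $C \cap \bigcup_n A_n$; failing that, some fixed element $c \in C$ lies in infinitely many $A_n$, and the argument is iterated by reapplying $(ii)$ to the sequence $(A_n \setminus \{c\})_n$, still in $\aaa$ by hereditariness, to extract further elements.

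The main obstacle is this residual iterative step: in the worst case, each iteration produces only one new element lying in infinitely many $A_n$, and one must assemble infinitely many such elements into a single $\aaa$-set so as to form the final witness. This is handled by a diagonal construction, using hereditariness of $\aaa$ to maintain a decreasing chain of $\aaa$-witnesses $C \supseteq C_1 \supseteq C_2 \supseteq \cdots$ across iterations and extract a distinct element at each stage, so that the resulting set sits inside the initial $C \in \aaa$.
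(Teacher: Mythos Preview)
Your argument for $(i)\Rightarrow(ii)$ is fine, and so is Case~1 of $(ii)\Rightarrow(i)$. The gap is in Case~2. The dichotomy you rely on --- either there is an infinite $D\subseteq C\cap\bigcup_n A_n$ with $D\cap A_n$ finite for all $n$, or some $c\in C$ lies in infinitely many $A_n$ --- is simply false. Take $S=\nn$, $\aaa=[\nn]^\infty$, $A_0=2\nn$, $A_n=\{2n\}\cup(2\nn+1)$ for $n\geq 1$, and $C=2\nn$ (a legitimate output of $(ii)$, since $C\cap A_n\neq\varnothing$ for every $n$). Then $C\cap A_n$ is infinite only for $n=0$, so you are in Case~2; every infinite $D\subseteq C=A_0$ has $D\cap A_0=D$ infinite, so no such $D$ exists; yet each $2k\in C$ lies only in $A_0$ and $A_k$, hence in just two of the $A_n$. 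Even setting this aside, the iterative fallback in your last paragraph does not work: reapplying $(ii)$ to $(A_n\setminus\{c\})_n$ produces \emph{some} $C_1\in\aaa$, with no relation whatsoever to $C$, so there is no mechanism to enforce the chain $C\supseteq C_1\supseteq\cdots$ you describe. Heredity only lets you pass to infinite subsets of a \emph{given} member of $\aaa$; it does not let you steer the output of $(ii)$. Thus the points $c_0,c_1,\ldots$ you extract may sit in pairwise unrelated members of $\aaa$, and their union need not belong to $\aaa$.

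A short repair avoids the case split entirely. Fix an enumeration $S=\{s_0,s_1,\ldots\}$ and set $B_n=A_n\setminus\{s_0,\ldots,s_{n-1}\}$; each $B_n$ is cofinite in $A_n$, hence infinite and in $\aaa$ by heredity. Apply $(ii)$ \emph{once} to $(B_n)$ to obtain $C\in\aaa$ and $n_0<n_1<\cdots$ with $C\cap B_{n_j}\neq\varnothing$, and pick $c_j\in C\cap B_{n_j}$. Since $c_j\notin\{s_0,\ldots,s_{n_j-1}\}$ and $n_j\to\infty$, the set $D=\{c_j:j\in\nn\}$ is infinite; as $D\subseteq C$ we get $D\in\aaa$; and for each $k$ all but finitely many $c_j$ lie in $A_{n_j}\subseteq\bigcup_{n\geq k}A_n$, so $D$ witnesses $(i)$. (The paper itself gives no proof here, referring instead to \cite{DK}.)
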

Much of our interest on M-families stems from the fact that they possess
strong structural properties. To state the particular property we need,
we recall the following notion.
\begin{defn} \label{d27}
Let $\aaa, \bbb\subseteq [S]^{\infty}$ be two hereditary and orthogonal
families. A \emph{perfect Lusin gap} inside $(\aaa,\bbb)$ is a continuous,
one-to-one map $2^\nn\ni\sg \mapsto (A_\sg, B_\sg)\in \aaa\times \bbb$
such that the following are satisfied.
\begin{enumerate}
\item[(1)] For every $\sg\in 2^\nn$ we have $A_\sg\cap B_\sg=\varnothing$.
\item[(2)] For every $\sg, \tau\in 2^\nn$ with $\sg\neq \tau$ we have
$(A_\sg\cap B_\tau)\cup (A_\tau\cap B_\sg)\neq \varnothing$.
\end{enumerate}
\end{defn}
The notion of a perfect Lusin gap is due to S. Todorcevic \cite{To}
though it can be traced on earlier work of K. Kunen.

It is relatively easy to see that if $\aaa,\bbb\subseteq [S]^{\infty}$
are hereditary and orthogonal families and there exists
a perfect Lusin gap inside $(\aaa,\bbb)$, then $\aaa$ is \textit{not}
countably generated in $\bbb^\perp$. We will need the following theorem
which establishes the converse for certain pairs of orthogonal families
(see \cite[Theorem I]{DK}).
\begin{thm} \label{t28}
Let $\aaa, \bbb\subseteq [S]^{\infty}$ be two hereditary and orthogonal
families. Assume that $\aaa$ is analytic\footnote[4]{A subset $A$ of a
Polish space $X$ is said to be \textit{analytic} if there exists a Borel
map $f:\nn^\nn\to X$ such that $f(\nn^\nn)=A$. The complement of an
analytic set is said to be \textit{co-analytic}.} and that $\bbb$ is
an M-family and $C$-measurable\footnote[5]{A subset of a Polish space
is said to be \textit{$C$-measurable} if it belongs to the smallest
$\sigma$-algebra that contains the open sets and is closed under the
Souslin operation. All analytic and co-analytic sets are
$C$-measurable (see \cite{Kechris}).}. Then, either
\begin{enumerate}
\item[(i)] $\aaa$ is countably generated in $\bbb^\perp$, or
\item[(ii)] there exists a perfect Lusin gap inside $(\aaa,\bbb)$.
\end{enumerate}
\end{thm}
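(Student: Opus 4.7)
The plan is to prove the contrapositive: assuming $\aaa$ is not countably generated in $\bbb^\perp$, I will build a perfect Lusin gap inside $(\aaa,\bbb)$ by a fusion-style recursion over $\ct$. Failure of (i) says precisely that for every sequence $(C_n)$ in $\bbb^\perp$ the residual set $\{A\in\aaa:A\not\subseteq^* C_n\ \forall n\in\nn\}$ is non-empty; since $\aaa$ is analytic and $\bbb^\perp$ is $C$-measurable (being derived from the $C$-measurable $\bbb$), this residual is itself $C$-measurable, so a Jankov--von Neumann type uniformization will provide the measurable selectors needed at every step of the recursion.

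The recursion produces, for each $s\in\ct$, sets $A_s\in\aaa$ and $B_s\in\bbb$, levels $n_s\nearrow\infty$, and distinguished points $x_{s,i,j}\in S$ for $i\neq j$ in $\{0,1\}$, maintaining the invariants
(a) $A_s\cap[0,n_s)$ and $B_s\cap[0,n_s)$ are frozen under all extensions of $s$, guaranteeing continuity of $\sg\mapsto(A_\sg,B_\sg)$;
(b) $A_s\cap B_s=\varnothing$;
(c) $x_{s,i,j}\in A_{s\con i}\cap B_{s\con j}\cap [n_s,n_{s\con i})$, and hence $x_{s,i,j}$ survives in $A_\sg\cap B_\tau$ for every $\sg,\tau\in 2^\nn$ extending $s\con i$ and $s\con j$ respectively.
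Setting $A_\sg=\bigcup_n (A_{\sg|n}\cap[0,n_{\sg|n}))$ and defining $B_\sg$ analogously, (a) delivers continuity of the map, and (c) delivers both injectivity and the cross-intersections required at distinct $\sg,\tau$, applied at their branching node $\sg\wedge\tau$. Membership $B_\sg\in\bbb$ will be extracted from the sequence $(B_{\sg|n})\subseteq\bbb$ by invoking Fact~\ref{f26}, which is exactly the M-family strength I need to ensure the branch-limit lies in $\bbb$; membership $A_\sg\in\aaa$ I would handle by arranging the $A_s$'s to follow basic open sets of a fixed Suslin representation of $\aaa$, so that each branch of $\ct$ codes a path in $\nn^\nn$ that maps to $A_\sg$ under the parametrizing function.

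The hard part is the splitting step at a single node: given $(A_s,B_s)$, one must produce $A_{s\con 0},A_{s\con 1}\in\aaa$ and $B_{s\con 0},B_{s\con 1}\in\bbb$ together with the four witnesses $x_{s,i,j}$ demanded by (c). I expect this to reduce to a local dichotomy at $s$: either such a splitting exists, or else the ``residual below $s$'' admits an almost covering by a single pair of $\bbb^\perp$-sets. If the latter alternative persisted at every node, the assembly of these pair-coverings along the tree would furnish a countable family in $\bbb^\perp$ that almost covers all of $\aaa$, contradicting failure of (i). Formalizing this local dichotomy, preserving it as a recursion invariant along every branch, and doing so inside the measurable category so that the selectors and limits all remain in $\aaa$ and $\bbb$, is the combinatorial core of the proof; the M-family hypothesis is used repeatedly to stabilize the countably many $B$-choices, while the $C$-measurability of $\bbb$ is what makes the uniformizations legal at each stage of the $\omega$-step construction.
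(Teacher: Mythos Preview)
The paper does not prove Theorem~\ref{t28}: it is quoted as background from \cite[Theorem~I]{DK} in \S2.2 and used as a black box, so there is no in-paper argument to compare your proposal against.

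As a sketch toward the result of \cite{DK}, your outline is in the right spirit---a fusion over $\ct$, a Suslin scheme to keep branch-limits inside the analytic $\aaa$, and a local splitting dichotomy driven by the failure of countable generation---but two places are genuinely underspecified. First, your appeal to Fact~\ref{f26} to secure $B_\sg\in\bbb$ does not do what you want: the M-family axiom, applied to a \emph{single} sequence, produces \emph{some} diagonal element of $\bbb$, whereas you have uncountably many branches and you have already committed to the specific set $B_\sg=\bigcup_n\big(B_{\sg|n}\cap[0,n_{\sg|n})\big)$. Getting every such $B_\sg$ into $\bbb$ requires weaving the M-family property into the recursion itself (refining at each node so that the frozen initial segments along any branch are tails of a fixed member of $\bbb$), and this demands more than invariant~(a) records. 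Second, the ``local dichotomy'' you propose---either a split with the four witnesses exists, or the residual below $s$ is almost covered by a pair from $\bbb^\perp$---is exactly the crux, and formulating it so that failure at every node really assembles into a countable $\bbb^\perp$-cover of all of $\aaa$ (not just of the sets reachable from the root of your recursion) is where the substantive combinatorics lives. Your sketch flags both issues but does not resolve them; the details are in \cite{DK}.
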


\subsection{Increasing and decreasing antichains of a regular dyadic tree}

We recall the following classes of antichains of the Cantor tree
introduced in \cite[\S 3]{ADK1}.
\begin{defn} \label{d29}
Let $D$ be a regular dyadic subtree of the Cantor tree.
An infinite antichain $(s_n)$ of $D$ will be called \emph{increasing}
if the following conditions are satisfied.
\begin{enumerate}
\item[(1)] For every $n,m\in\nn$ with $n<m$ we have $|s_n|_D<|s_m|_D$.
\item[(2)] For every $n,m,l\in\nn$ with $n<m<l$ we have
$|s_n|_D\leq |s_m\wedge_D s_l|_D$.
\item[(3I)] For every $n,m\in\nn$ with $n<m$ we have $s_n\prec s_m$.
\end{enumerate}
The set of all increasing antichains of $D$ will be denoted by $\incr(D)$.
Respectively, an infinite antichain $(s_n)$ of $D$ will be called
\emph{decreasing} if \emph{(1)} and \emph{(2)} above are satisfied and condition
\emph{(3I)} is replaced by the following.
\begin{enumerate}
\item[(3D)] For every $n,m\in\nn$ with $n<m$ we have $s_m\prec s_n$.
\end{enumerate}
The set of all decreasing antichains of $D$ will be denoted by
$\decr(D)$.
\end{defn}
We will need the following stability properties of the above defined
classes of antichains (see \cite[Lemma 8]{ADK1}).
\begin{lem} \label{l210}
Let $D$ be a regular dyadic subtree of $\ct$. Then the following hold.
\begin{enumerate}
\item[(i)] Let $(s_n)$ be an infinite antichain of $D$ and $(s_{n_k})$
be a subsequence of $(s_n)$. If $(s_n)\in \incr(D)$, then
$(s_{n_k})\in\incr(D)$. Respectively, if $(s_n)\in\decr(D)$,
then $(s_{n_k})\in\decr(D)$.
\item[(ii)] For every infinite antichain $(s_n)$ of $D$ there
exists a subsequence $(s_{n_k})$ of $(s_n)$ such that
either $(s_{n_k})\in\incr(D)$ or $(s_{n_k})\in\decr(D)$.
\item[(iii)] We have $\incr(D)=\incr(\ct)\cap D^\nn$
and $\decr(D)=\decr(2^\nn)\cap D^\nn$. In particular, if
$R$ is a regular dyadic subtree of $\ct$ with $R\subseteq D$,
then $\incr(R)=\incr(D)\cap R^\nn$ and $\decr(R)=\decr(D)\cap R^\nn$.
\end{enumerate}
\end{lem}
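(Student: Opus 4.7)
The plan is to prove the three parts separately, with the difficulty increasing across them. Part (i) is immediate: each of the defining conditions (1), (2), (3I), (3D) is a universally quantified inequality over pairs or triples of indices, so any subsequence automatically inherits it—if $k_1<k_2$ then $n_{k_1}<n_{k_2}$, and similarly for triples, transferring the inequalities verbatim.

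For part (ii), I would apply Ramsey's theorem iteratively in three stages. First, since each $D$-level $k$ contains only $2^k$ nodes and $(s_n)$ is an infinite antichain, the sequence $(|s_n|_D)$ is unbounded, so I thin to make it strictly increasing, securing condition (1). Second, I would 2-color the triples $\{n<m<l\}$ according to whether $|s_n|_D\leq|s_m\wedge_D s_l|_D$; Ramsey yields a monochromatic subsequence, and the good color gives condition (2). In the bad color, taking $n=0$ gives $|s_m\wedge_D s_l|_D<|s_0|_D$ for all $1\leq m<l$, so these meets take only finitely many values in $D$; a secondary Ramsey/pigeonhole applied to pairs $\{m<l\}$ forces this meet to equal a single node $w\in D$. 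But then, coloring each remaining $s_n$ by which of the two $D$-children of $w$ it extends, among any three indices at least two share the color, which forces their $D$-meet to strictly extend $w$—contradiction. Third, with conditions (1) and (2) in hand, Ramsey on pairs with the 2-coloring ``$s_n\prec s_m$ vs.\ $s_m\prec s_n$'' produces a subsequence in $\incr(D)$ or in $\decr(D)$, yielding (3I) or (3D).

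For part (iii), I would exploit the structure of a regular dyadic subtree $D=\{s_t:t\in\ct\}$. Writing $\psi(k)$ for the common $\ct$-length of nodes at $D$-level $k$, the function $\psi$ is strictly increasing, with $|s_t|_D=|t|$ and $|s_t|=\psi(|t|)$. The key geometric observation is that for incomparable $s_m=s_{t_m},s_l=s_{t_l}\in D$, the $\ct$-meet $s_m\wedge s_l$ coincides with the $\ct$-meet of $s_{t^\con 0}$ and $s_{t^\con 1}$, where $t=t_m\wedge t_l$ and, WLOG, $t^\con 0\sqsubseteq t_m$ and $t^\con 1\sqsubseteq t_l$; this follows from the $\prec$-branching inherent in the dyadic structure together with the incomparability of the two $D$-children of $s_t$. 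Consequently $|s_m\wedge s_l|\in[\psi(|t|),\psi(|t|+1))$, and combined with the strict monotonicity of $\psi$ this shows that conditions (1) and (2) for an antichain in $D$ are equivalent when read in $D$ or in $\ct$; condition (3I)/(3D) is literally the same in both. The final claim $\incr(R)=\incr(D)\cap R^\nn$ for $R\subseteq D$ then follows by a one-line intersection argument from the identities already established.

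The main obstacle is the second stage of part (ii): ruling out the ``bad'' Ramsey color requires the secondary pigeonhole argument together with the geometric fact that at most two antichain elements can pairwise $D$-branch from a single node.
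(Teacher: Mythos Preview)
The paper does not actually prove this lemma; it is quoted verbatim from \cite[Lemma 8]{ADK1} and stated without proof, so there is no in-paper argument to compare against. Your proposal is a correct self-contained proof, and the outline you give is essentially the standard one.

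A few minor comments on execution. In part (ii), your treatment of the ``bad'' Ramsey colour is fine but can be streamlined: once all pairwise $D$-meets of the thinned subsequence equal a fixed node $w$, you do not need to invoke ``any three indices''---with infinitely many terms and only two $D$-children of $w$, two terms must extend the same child, and their $D$-meet then strictly extends $w$, which is already the contradiction. In part (iii), your key inequality $\psi(|t|)\leq |s_m\wedge s_l|<\psi(|t|+1)$ (with $t=t_m\wedge t_l$) is exactly what is needed; note that in fact $s_m\wedge s_l=s_{t^{\smallfrown}0}\wedge s_{t^{\smallfrown}1}$, which you state and which follows because any common prefix of $s_m$ and $s_l$ of length $<\psi(|t|+1)$ is a common prefix of $s_{t^{\smallfrown}0}$ and $s_{t^{\smallfrown}1}$, and conversely. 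From the displayed inequality the equivalence of condition (2) in $D$ and in $\ct$ follows by strict monotonicity of $\psi$, as you indicate. The final intersection identity for $R\subseteq D$ is then immediate.
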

Notice that for every regular dyadic subtree $D$ of $\ct$
the sets $\incr(D)$ and $\decr(D)$ are Polish subspaces
of $D^\nn$. We will also need the following partition result
(see \cite[Theorem 10]{ADK1}).
\begin{thm} \label{t211}
Let $D$ be a regular dyadic subtree of $\ct$ and $\ccc$ be an
analytic subset of $D^\nn$. Then there exists a regular
dyadic subtree $R$ of $\ct$ with $R\subseteq D$ and such that
either $\incr(R)\subseteq\ccc$ or $\incr(R)\cap\ccc=\varnothing$.
Respectively, there exists a regular dyadic subtree $R'$ of $\ct$
with $R'\subseteq D$ and such that either $\decr(R')\subseteq\ccc$
or $\decr(R')\cap\ccc=\varnothing$.
\end{thm}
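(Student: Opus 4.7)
My plan is to model the proof on the Galvin--Prikry Ramsey theorem, viewing regular dyadic subtrees of $\ct$ as the analogue of infinite subsets of $\nn$ and increasing (resp.\ decreasing) antichains as the analogue of strictly increasing sequences. I will focus on the increasing case, the decreasing case being entirely symmetric. The argument proceeds in three stages: a clopen base case proved by fusion, an extension to closed sets via a derivative, and an unfolding step to reach analytic sets.

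For the base case I suppose that $\ccc$ is clopen in $D^\nn$. Since membership in $\ccc$ is then determined by a finite initial segment, I can build the target regular dyadic subtree $R \subseteq D$ by fusion: at stage $n$ I fix the first $n$ levels of $R$ (respecting the regularity requirement that all nodes at the $n$th abstract level sit at a common actual length in $\ct$) and pass to a further regular dyadic sub-subtree of $D$ to guarantee that every finite initial piece of an increasing antichain present in the current fixed part has been assigned a definite outcome, uniformly in its possible extensions. The engine here is a purely combinatorial lemma stating that for any regular dyadic subtree $D$ and any finite coloring of the finite increasing-antichain germs in $D$, there is a regular dyadic $R \subseteq D$ on which the coloring is constant; this is proved by iterated pigeonhole, choosing the spine of $R$ one level at a time and peeling off, at each new level, a monochromatic branching pattern.

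To pass from clopen to closed I iterate the clopen case via a monotone derivative on regular dyadic subtrees: given a closed set $\fff$ and a regular dyadic $D$, define $D^*$ by pruning $D$ to forbid the increasing-antichain germs that still admit an extension into $\fff$ inside the current subtree. Either the derivative stabilizes after countably many steps, giving $\incr(R) \cap \fff = \varnothing$ on the stable $R$, or at some stage the clopen step can be invoked to produce $R$ with $\incr(R) \subseteq \fff$. To pass from closed to analytic, I use the standard unfolding trick: write $\ccc = \pi(\fff)$ with $\fff \subseteq D^\nn \times \nn^\nn$ closed and $\pi$ the projection, enrich the notion of increasing antichain by coupling each node with a witness coordinate in $\nn^\nn$, and apply the closed case to $\fff$ in the enlarged space, reading off the desired decision on $\ccc$ from the alternative that emerges. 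The hard part will be the combinatorial lemma underlying the clopen case: regularity forces all nodes at the $n$th abstract level of $R$ to share a common actual length in $\ct$, and the rigid shape of an increasing antichain (a right-going spine with left branches) means that extending the current approximation of $R$ by even one level requires simultaneously committing on many distinct antichain germs; orchestrating the fusion so that regularity, the shape of $\incr$, and the coloring decisions all survive each finite step is where the genuine Ramsey-theoretic content of the theorem lies.
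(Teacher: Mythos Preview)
The paper does not prove Theorem~\ref{t211} at all: it is quoted from \cite[Theorem~10]{ADK1} and attributed, in turn, to Kanellopoulos's work \cite{Ka} on Ramsey families of subtrees. So there is no ``paper's own proof'' to compare against; you are supplying what the paper deliberately outsources.

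Your overall architecture---finite-dimensional pigeonhole lemma, fusion for clopen sets, then a lift to closed and finally to analytic sets---is the right template and matches how results of this type (Milliken, Carlson--Simpson, and indeed \cite{Ka}) are actually proved. Two points deserve tightening. First, the passage from clopen to closed via a ``monotone derivative on regular dyadic subtrees'' is not well-posed as you describe it: pruning germs that still extend into $\fff$ does not in general leave you with a regular dyadic subtree, and there is no obvious transfinite limit along which such an operation ``stabilizes''. What you want instead is the combinatorial-forcing dichotomy (accept/reject) for open sets: say a finite increasing germ \emph{accepts} $\ccc$ in $D$ if some regular dyadic $R\subseteq D$ has every extension of the germ inside $\incr(R)$ landing in $\ccc$, and \emph{rejects} otherwise; then show by fusion that below any $D$ there is $R$ on which every germ decides, and read off the alternative. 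Second, the unfolding step for analytic sets needs you to run the closed case not for $\incr(R)$ but for a product Ramsey space carrying the $\nn^\nn$-witness; this means your finite-dimensional lemma and your fusion must already be stated for the enriched objects, not bolted on afterwards. Both repairs are standard, and once made your sketch is a legitimate proof outline in the spirit of \cite{Ka}.
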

We notice that Theorem \ref{t211} is essentially a consequence
of the work of V. Kanellopoulos \cite{Ka} on Ramsey families of
subtrees of the Cantor tree.

\subsection{Selection of subsequences}

Let $X$ be a separable Banach space and for every $n\in\nn$
let $(x^n_k)$ be a weakly null sequence in $X$. If the dual
$X^*$ of $X$ is separable, then there exists a strictly increasing
sequence $(k_n)$ in $\nn$ such that the sequence $(x^n_{k_n})$
is also weakly null. This property fails if $X$ does not
contain a copy of $\ell_1$ and $X^*$ is non-separable
(see \cite{ADK1,ADK2}). Nevertheless, in this case
we have the following ``weak subsequence selection" principle
discovered by H. P. Rosenthal (see \cite[Theorem 3.6]{Ro4}).
\begin{thm} \label{t212}
Let $X$ be a separable Banach space and $\kkk$ be a weak* compact
subset of $X^{**}$ consisting only of Baire-1 functions\footnote[6]{An
element $x^{**}$ of $X^{**}$ is said to be \textit{Baire-1}
if $x^{**}$ is a Baire-1 function on $X^*$ when $X^*$ is
equipped with the weak* topology.}. For every
$n\in\nn$ let $(x^n_k)$ be a sequence in $\kkk$ which is
weak* convergent to an element $x^{**}_n$. Assume that
the sequence $(x^{**}_n)$ is weak* convergent to an element
$x^{**}$. Then there exists a sequence $(n_i,k_i)$ in $\nn\times\nn$
with $n_i< k_i<n_{i+1}$ for every $i\in\nn$ and such that the sequence
$(x^{n_i}_{k_i})$ is weak* convergent to $x^{**}$.
\end{thm}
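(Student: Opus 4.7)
The plan is to build the subsequence by a diagonal recursion against a countable weak*-dense subset of $B_{X^*}$, and then to upgrade the resulting pointwise convergence to weak* convergence using the Bourgain--Fremlin--Talagrand angelicity theorem for weak*-compact sets of Baire-1 functions.

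First I would use the separability of $X$: the ball $B_{X^*}$ is weak*-compact metrizable, so it admits a countable weak*-dense enumeration $(x^*_j)_{j\in\nn}$. Setting $k_0:=0$, I proceed by recursion. At stage $i\geq 1$, using the weak* convergence $x^{**}_n\to x^{**}$, pick $n_i>k_{i-1}$ with $|(x^{**}_{n_i}-x^{**})(x^*_j)|<(2i)^{-1}$ for every $j\leq i$; then, using the weak* convergence $x^{n_i}_k\to x^{**}_{n_i}$, pick $k_i>n_i$ with $|(x^{n_i}_{k_i}-x^{**}_{n_i})(x^*_j)|<(2i)^{-1}$ for every $j\leq i$. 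Setting $y_i:=x^{n_i}_{k_i}$, the triangle inequality gives $|y_i(x^*_j)-x^{**}(x^*_j)|<i^{-1}$ whenever $j\leq i$. Hence $n_i<k_i<n_{i+1}$ and $(y_i)$ converges to $x^{**}$ pointwise on the countable weak*-dense set $(x^*_j)$.

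To upgrade pointwise convergence on $(x^*_j)$ to weak* convergence in $X^{**}$, I would invoke the Bourgain--Fremlin--Talagrand theorem, which forces $\kkk$ to be sequentially compact and angelic in the weak* topology. Arguing by contradiction, suppose $(y_i)$ does not weak*-converge to $x^{**}$; sequential compactness then produces a sub-subsequence $(y_{i_\ell})$ converging weak* to some $z^{**}\in\kkk\setminus\{x^{**}\}$. This $z^{**}$ is a Baire-1 element of $\kkk$ agreeing with $x^{**}$ on $(x^*_j)$, and it arises as the weak*-limit of the diagonal sub-sequence $(x^{n_{i_\ell}}_{k_{i_\ell}})$ of the original array with both indices tending to infinity.

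The hard part will be forcing $z^{**}=x^{**}$ from this partial agreement. The Baire-1 hypothesis is essential here: for $X^*$ non-separable, two distinct Baire-1 elements of $X^{**}$ can agree on a weak*-dense subset of $B_{X^*}$, so the equality is not automatic from pointwise agreement. The key is to combine the recursion's uniform estimates $|y_{i_\ell}(x^*_j)-x^{**}_{n_{i_\ell}}(x^*_j)|<(2i_\ell)^{-1}$, the weak* convergence $x^{**}_{n_{i_\ell}}\to x^{**}$, and the full angelicity machinery to show that the weak*-cluster points of the diagonal within $\kkk$ collapse to the single element $x^{**}$, yielding the contradiction.
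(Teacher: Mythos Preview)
The paper does not prove this statement: Theorem~\ref{t212} is quoted as background material and attributed to Rosenthal \cite[Theorem 3.6]{Ro4}, so there is no in-paper proof to compare against. I will therefore assess your argument on its own.

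Your diagonal construction against a countable weak*-dense set $(x^*_j)\subseteq B_{X^*}$ is fine and does produce a sequence $(y_i)=(x^{n_i}_{k_i})$ with $n_i<k_i<n_{i+1}$ converging to $x^{**}$ pointwise on $\{x^*_j:j\in\nn\}$. The gap is exactly where you flag it, and your last paragraph does not close it. You correctly observe that two distinct Baire-1 elements of $X^{**}$ can agree on a weak*-dense subset of $B_{X^*}$; this is precisely why the ingredients you list cannot be combined to force $z^{**}=x^{**}$. The estimate $|y_{i_\ell}(x^*_j)-x^{**}_{n_{i_\ell}}(x^*_j)|<(2i_\ell)^{-1}$ together with $x^{**}_{n_{i_\ell}}\to x^{**}$ weak* only gives back what you already have, namely $z^{**}(x^*_j)=x^{**}(x^*_j)$ for all $j$. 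Angelicity (the Fr\'echet--Urysohn property of Rosenthal compacta) tells you that closures are realized by sequences, but it says nothing about identifying the limit of a \emph{given} sequence; sequential compactness likewise only furnishes \emph{some} cluster point, not the one you want. So the ``full angelicity machinery'' invocation is a placeholder, not an argument, and the scheme as written does not go through when $X^*$ is non-separable --- which, as the paper notes just before stating the theorem, is the only interesting case.

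Rosenthal's actual proof does not attempt to pin down the limit via a countable dense set of functionals. One workable route is to argue directly inside the separable Rosenthal compactum $L=\overline{\{x^n_k:n,k\in\nn\}}^{w^*}\subseteq\kkk$ and use the structure theory of such compacta (points of $L$ are $G_\delta$, so $x^{**}$ has a countable neighborhood \emph{base} in $L$, against which one can genuinely diagonalize); another is the combinatorial argument in \cite{Ro4} itself. Either way, the missing idea is to obtain countably many \emph{weak*-open} conditions at $x^{**}$ that jointly determine convergence in $L$, rather than countably many functionals that merely lie densely in $B_{X^*}$.
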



\section{Proof of Theorem \ref{t13}}

This section is devoted to the proof of Theorem \ref{t13}. The fact
that (2) implies (1) follows from the following general fact.
\begin{lem} \label{l313}
Let $X$ and $Y$ be Banach spaces and $T:X\to Y$ be an operator. Assume that
there exists a bounded sequence $(x_t)_{t\in\ct}$ in $X$ such that its image
$(T(x_t))_{t\in\ct}$ is topologically equivalent to the basis of James
tree. Then $T^*$ has non-separable range.
\end{lem}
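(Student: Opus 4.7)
The plan is to extract, from clauses (1) and (3) of Definition \ref{d11}, an uncountable family of vectors in $Y^{**}$ whose pairwise differences lie outside $Y$, and then to use the basic-sequence structure to turn this into an uncountable $\varepsilon$-separated family inside $T^*(Y^*)$.

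First I would fix, for every $\sigma\in 2^\nn$, the weak$^*$-limit $y^{**}_\sigma := w^*\text{-}\lim_n T(x_{\sigma|n}) \in Y^{**}\setminus Y$ supplied by clause (3), together with the distinctness $y^{**}_\sigma \neq y^{**}_\tau$ for $\sigma\neq\tau$. By clause (1), $(T(x_{t_n}))$ is a seminormalized basic sequence; let $E := \overline{\mathrm{span}\{T(x_t):t\in\ct\}}$ and let $(z^*_n)\subseteq Y^*$ be Hahn-Banach extensions of its biorthogonal functionals, so that $z^*_n(T(x_{t_m}))=\delta_{nm}$ and $\sup_n\|z^*_n\|<\infty$.

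Next I would establish the key strengthening: for any $\sigma\neq\tau$, $y^{**}_\sigma - y^{**}_\tau\notin Y$. Indeed each $y^{**}_\sigma$ lies in the weak$^*$-closure of $E$ inside $Y^{**}$, which may be identified with $E^{**}$; since $E$ is norm-closed in $Y$, a standard separation argument yields $E^{**}\cap Y = E$. Moreover $y^{**}_\sigma(z^*_n) = \lim_m z^*_n(T(x_{\sigma|m})) = 0$ for every $n$, since $\sigma|m = t_n$ holds for at most one $m$. Consequently, if $y^{**}_\sigma - y^{**}_\tau$ were in $Y$ it would lie in $E^{**}\cap Y = E$ and have all coefficients with respect to the basis $(T(x_{t_n}))$ equal to zero, forcing it to vanish -- a contradiction.

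The last step -- and the one I expect to be the main obstacle -- is turning the distinctness of $\{y^{**}_\sigma\}$ modulo $Y$ into norm non-separability of $T^*(Y^*)\subseteq X^*$. Arguing by contradiction, suppose $T^*(Y^*)$ is separable; for each $\sigma$ pick a weak$^*$-cluster point $x^{**}_\sigma\in X^{**}$ of the bounded sequence $(x_{\sigma|n})_n$, so that $T^{**}(x^{**}_\sigma)=y^{**}_\sigma$ by weak$^*$-to-weak$^*$ continuity of $T^{**}$. Fixing $(y^*_k)\subseteq Y^*$ with $\{T^*(y^*_k)\}$ norm-dense in $T^*(Y^*)$, one checks immediately that the map $\sigma \mapsto (y^{**}_\sigma(y^*_k))_k$ is injective. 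The genuine difficulty is to upgrade this injectivity to an uncountable, uniformly norm-separated family inside $T^*(Y^*)$: here the uniform bound on $\|z^*_n\|$ provided by the basic-sequence hypothesis, the tree organisation of $\ct$, and the previous step can be combined -- for instance by constructing a perfect Lusin gap in the sense of Definition \ref{d27} and invoking the dichotomy of Theorem \ref{t28} -- to produce $(y^*_\sigma)_{\sigma\in 2^\nn}\subseteq Y^*$ whose images $T^*(y^*_\sigma)$ are $\varepsilon$-separated in $X^*$, contradicting separability.
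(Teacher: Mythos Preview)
Your final paragraph is not a proof: you assert that the tree structure, the bounds on $\|z^*_n\|$, and the distinctness modulo $Y$ ``can be combined'' via a perfect Lusin gap and Theorem~\ref{t28} to yield an $\varepsilon$-separated family in $T^*(Y^*)$, but you give no indication of which hereditary families $\aaa,\bbb\subseteq[\nn]^\infty$ you would feed into that theorem, why they would be orthogonal, or why alternative~(i) of the dichotomy would fail. The machinery of Theorem~\ref{t28} is what the paper uses for the \emph{hard} implication (i)$\Rightarrow$(ii) of Theorem~\ref{t13}; invoking it for this lemma is both circular in spirit and massive overkill. More to the point, nothing in your outline ever uses clause~(2) of Definition~\ref{d11} --- that $(T(x_t))_{t\in A}$ is weakly null along every infinite antichain $A$ --- and without that clause I do not see how your approach closes.

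The paper's proof is a five-line Baire Category argument. Assume $\{T^*(y^*_i):i\in\nn\}$ is norm-dense in $T^*(Y^*)$ and set
\[
F_{i,m,k}=\big\{\sigma\in 2^\nn: y^*_i\big(T(x_{\sigma|n})\big)\geq 2^{-m}\text{ for all }n\geq k\big\}.
\]
Each $F_{i,m,k}$ is closed, and clause~(3) together with the boundedness of $(x_t)$ (to pass from an arbitrary $y^*$ to some $y^*_i$) gives $2^\nn=\bigcup_{i,m,k}F_{i,m,k}$. Baire then produces $t_0\in\ct$ with $\{\sigma:t_0\sqsubset\sigma\}\subseteq F_{i_0,m_0,k_0}$, so $y^*_{i_0}(T(x_s))\geq 2^{-m_0}$ for all $s\sqsupset t_0$ with $|s|\geq k_0$; applying this along an infinite antichain of such $s$ contradicts clause~(2). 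Your correct but unused observations about biorthogonals and about $y^{**}_\sigma-y^{**}_\tau\notin Y$ are simply not needed.
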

\begin{proof}
It is essentially a consequence of the Baire Category Theorem.
Indeed assume, towards a contradiction, that there exists a sequence
$(y^*_n)$ in $Y^*$ such that the set $\{T^*(y^*_n):n\in\nn\}$ is norm-dense
in $T^*(Y^*)$. For every $i,m,k\in\nn$ we define
\[ F_{i,m,k}=\big\{ \sg\in 2^\nn: y^*_i\big(T(x_{\sg|n})\big)\geq 2^{-m}
\text{ for every } n\geq k\big\}. \]
Clearly the set $F_{i,m,k}$ is closed. Using the fact that for every $\sg\in 2^\nn$
the sequence $(T(x_{\sg|n}))$ is weak* convergent to an element
$y^{**}_\sg\in Y^{**}\setminus Y$ and our assumption that the sequence
$(x_t)_{t\in\ct}$ is bounded, we see that
\[ 2^\nn =\bigcup_{i,m,k\in\nn} F_{i,m,k}.\]
Therefore, there exist $t_0\in\ct$ and $i_0,m_0,k_0\in\nn$ such that
\[ \{ \sg\in 2^\nn: t_0\sqsubset \sg\}\subseteq F_{i_0,m_0,k_0}.\]
This implies that $y^*_{i_0}\big(T(x_s)\big)\geq 2^{-m_0}$ for every $s\in\ct$
such that $t_0\sqsubset s$ and $|s|\geq k_0$. But the sequence $(T(x_{s_n}))$
is weakly null for every infinite antichain $(s_n)$ in $\ct$, and in particular,
for every infinite antichain $(s_n)$ satisfying $t_0\sqsubset s_n$ and
$|s_n|\geq k_0$ for every $n\in\nn$. Having arrived to the desired
contradiction the proof is completed.
\end{proof}
It remains to show that (1) implies (2). We need to find a sequence
$(x_t)_{t\in\ct}$ in $X$ such that both $(x_t)_{t\in\ct}$ and
$(T(x_t))_{t\in\ct}$ are topologically equivalent to the basis
of James tree. Our strategy is to transform the problem to a
discrete one concerning families of infinite sets. This reduction
will unable us to apply the machinery presented in \S 2.2 and
\S 2.3 and eventually construct the sequence $(x_t)_{t\in\ct}$.

To this end we argue as follows. \textit{We fix a dense sequence $(d_n)$
in the closed unit ball $B_X$ of $X$ and we set $r_n=T(d_n)$ for every
$n\in\nn$}. Notice that the sequence $(d_n)$ is weak* dense in $B_{X^{**}}$.
By the Odell-Rosenthal Theorem \cite{OR} and our assumption that the space
$X$ does not contain a copy of $\ell_1$, we see that $B_{X^{**}}$
consists only of Baire-1 functions. Let $\hhh$ be the weak* closure of the
set $\{r_n:n\in\nn\}$ in $Y^{**}$. Clearly $\hhh$ is weak* compact. Moreover,
we have the following fact.
\begin{fact} \label{f314}
The set $\hhh$ consists only of Baire-1 functions.
\end{fact}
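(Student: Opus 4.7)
The plan is to identify $\hhh$ as a subset of $T^{**}(B_{X^{**}})$ and then transport the Baire-1 property from $X^{**}$ to $Y^{**}$ via the bi-adjoint. No combinatorics should be needed; the whole thing is a soft duality argument.

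First, I would note that $T^{**}:(X^{**},w^*)\to(Y^{**},w^*)$ is continuous and that $T^{**}(d_n)=T(d_n)=r_n$ for every $n\in\nn$. Since $B_{X^{**}}$ is weak* compact by the Banach--Alaoglu theorem, the image $T^{**}(B_{X^{**}})$ is weak* compact, hence weak* closed in $Y^{**}$, and it contains every $r_n$. Passing to the weak* closure we obtain $\hhh\subseteq T^{**}(B_{X^{**}})$.

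Next, fix any $h\in\hhh$ and write $h=T^{**}(x^{**})$ with $x^{**}\in B_{X^{**}}$. By the Odell-Rosenthal theorem, which applies because $X$ does not contain a copy of $\ell_1$ (and which was already invoked in the paragraph preceding the fact), $x^{**}$ is a Baire-1 function on $(X^*,w^*)$. Choose weak*-continuous functions $f_n:X^*\to\rr$ with $f_n\to x^{**}$ pointwise on $X^*$.

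Finally, observe that $T^*:(Y^*,w^*)\to(X^*,w^*)$ is continuous, so each composition $f_n\circ T^*:(Y^*,w^*)\to\rr$ is weak*-continuous. Unwinding the definition of the bi-adjoint gives
\[ h(y^*)=T^{**}(x^{**})(y^*)=x^{**}(T^*(y^*))=\lim_n (f_n\circ T^*)(y^*) \]
for every $y^*\in Y^*$, exhibiting $h$ as a pointwise limit of weak*-continuous functions on $Y^*$. Hence $h$ is Baire-1, which is the required conclusion. The argument is purely functorial, and there is no essential obstacle beyond the standard observation that Baire-1 status is preserved under pullback by a weak*-to-weak* continuous map, which in this setting is precisely the dual $T^*$.
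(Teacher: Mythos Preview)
Your argument is correct, but it follows a different path from the paper's. The paper does not pass through $T^{**}$ at all: it invokes Rosenthal's Main Theorem from \cite{Ro3}, which reduces the Baire-1 property of $\hhh$ to showing that every subsequence of $(r_n)$ has a weak Cauchy subsequence, and then derives this from Rosenthal's $\ell_1$ dichotomy (an $\ell_1$-subsequence of $(r_n)$ would force an $\ell_1$-subsequence of $(d_n)$, contradicting the hypothesis on $X$). Your route is softer: you embed $\hhh$ inside $T^{**}(B_{X^{**}})$, use Odell--Rosenthal on $X$ (already available in this section) to write any $x^{**}\in B_{X^{**}}$ as a pointwise limit of weak*-continuous functions, and then pull back along the weak*-continuous map $T^*$. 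This avoids both \cite{Ro3} and the $\ell_1$ dichotomy and is arguably the more transparent proof here. One remark in favor of the paper's approach, however: it is reused verbatim in \S 4 (Case 2 of Theorem \ref{t14}), where $X$ is allowed to contain $\ell_1$ and Odell--Rosenthal is unavailable; there the hypothesis is only that no subsequence of $(r_n)$ spans $\ell_1$, and your bi-adjoint argument no longer applies, whereas the Rosenthal dichotomy argument goes through unchanged.
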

\begin{proof}
By the Main Theorem in \cite{Ro3}, it is enough to show that for every
$N\in [\nn]^{\infty}$ there exists $L=\{l_0<l_1<...\}\in [N]^{\infty}$
such that the sequence $(r_{l_n})$ is weak Cauchy. If this is not the
case, then, by Rosenthal's Dichotomy \cite{Ro2}, there would existed
$M=\{m_0<m_1<...\}\in[\nn]^{\infty}$ such that the sequence $(r_{m_n})$
is equivalent to the standard unit vector basis of $\ell_1$; but then,
the sequence $(d_{m_n})$ would also be equivalent to the standard
unit vector basis of $\ell_1$, a contradiction.
\end{proof}
By the previous remarks, if we deal with a weak* compact subset of
$B_{X^{**}}$ or $\hhh$, then we have at our disposable all classical
machinery for compact subsets of Baire-1 functions discovered in
\cite{BFT,OR,Ro3}.  In what follows, we will use these results without
giving an explicit reference, unless there is some particular need to do so.

We are going to introduce four families of infinite subsets of $\nn$
which are naturally associated to the sequences $(d_n)$ and $(r_n)$.
These families will play a decisive r\^{o}le in the proof.
The first one is defined by
\begin{equation} \label{e32}
\ddd=\{ L\in [\nn]^{\infty}: \text{the sequence } (d_n)_{n\in L}
\text{ is weak* convergent}\}
\end{equation}
while the second one is defined by
\begin{equation} \label{e33}
\rrr=\{ L\in [\nn]^{\infty}: \text{the sequence } (r_n)_{n\in L}
\text{ is weak* convergent}\}.
\end{equation}
Before we give the definition of the next two families, we will
isolate some basic properties of $\ddd$ and $\rrr$.
\begin{fact} \label{f315}
The families $\ddd$ and $\rrr$ are hereditary, co-analytic and cofinal in
$[\nn]^{\infty}$.
\end{fact}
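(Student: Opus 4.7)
The plan is to verify the three assertions — hereditary, cofinal in $[\nn]^{\infty}$, and co-analytic — in parallel for $\ddd$ and $\rrr$, since the arguments are essentially identical once the cofinality step is arranged correctly.

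The hereditary claim is immediate from the fact that a subsequence of a weak* convergent sequence converges weak* to the same limit, so $L \in \ddd$ and $L' \in [L]^{\infty}$ force $L' \in \ddd$, and similarly for $\rrr$. For cofinality, fix $N \in [\nn]^{\infty}$. Since $(d_n)_{n \in N}$ is a bounded sequence in $X$ and $X$ contains no copy of $\ell_1$, Rosenthal's $\ell_1$ theorem produces $L \in [N]^{\infty}$ such that $(d_n)_{n \in L}$ is weak Cauchy in $X$. Any weak Cauchy sequence $(z_n)$ in a Banach space $Z$ defines, via $z^* \mapsto \lim z^*(z_n)$, a bounded linear functional on $Z^*$ (boundedness coming from the uniform boundedness principle applied to $(z_n)$), and is therefore weak* convergent in $Z^{**}$; this gives $L \in \ddd$. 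Since $T$ is weak-to-weak continuous, $(r_n)_{n \in L} = (T(d_n))_{n \in L}$ is also weak Cauchy in $Y$, and the same observation gives $L \in \rrr$. So the single set $L$ witnesses cofinality of both families simultaneously.

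For co-analyticity I use that separability of $X$ and $Y$ makes $(B_{X^*}, w^*)$ and $(B_{Y^*}, w^*)$ compact metrizable, hence Polish. The boundedness of $(d_n)$ in $B_X$ implies that $L \in \ddd$ if and only if for every $x^* \in B_{X^*}$ the real sequence $(x^*(d_n))_{n \in L}$ is Cauchy, so
\[
L \in \ddd \iff \forall x^* \in B_{X^*}\ \forall k \in \nn\ \exists N \in \nn\ \forall n, n' \in L\ \big( n, n' \geq N \Rightarrow |x^*(d_n) - x^*(d_{n'})| \leq 1/k \big).
\]
The matrix of this formula is Borel in the pair $(L, x^*)$, and a single universal quantifier over the Polish space $B_{X^*}$ places the whole statement in $\PB^1_1$ by the Kuratowski-Tarski algorithm; hence $\ddd$ is co-analytic. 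The argument for $\rrr$ is verbatim with $(r_n)$ and $B_{Y^*}$ in place of $(d_n)$ and $B_{X^*}$.

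I do not foresee a genuine obstacle in this fact; the only subtle point is that the cofinality of $\rrr$ is obtained through the non-$\ell_1$ hypothesis on $X$ rather than on $Y$, exploiting the fact that $T$ transports weak Cauchy sequences to weak Cauchy sequences. Without this route, boundedness of $(r_n)$ in $B_Y$ alone would not suffice to extract a weak* convergent subsequence in $Y^{**}$, since $B_{Y^{**}}$ need not be weak* sequentially compact when $Y$ contains $\ell_1$.
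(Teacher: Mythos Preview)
Your proof is correct and follows essentially the same approach as the paper's. The paper dispatches cofinality with ``easy to see'' and writes out the same weak-Cauchy formula you do for co-analyticity; your additional detail on cofinality (invoking Rosenthal's $\ell_1$ theorem on $X$ and pushing weak Cauchy sequences through $T$) is exactly the justification the paper leaves implicit, and your closing remark about why the argument for $\rrr$ must route through $X$ is well taken.
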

\begin{proof}
It is clear that both $\ddd$ and $\rrr$ are hereditary. It is also
easy to see that they are cofinal in $[\nn]^{\infty}$. To see that
$\ddd$ is co-analytic notice that
\begin{eqnarray*}
L\in \ddd & \Leftrightarrow & \text{the sequence } (d_n)_{n\in L}
\text{ is weak Cauchy} \\
& \Leftrightarrow & \forall x^*\in B_{X^*} \ \forall \ee>0 \
\exists k\in \nn \text{ such that} \\
& & |x^*(d_n)-x^*(d_m)|< \ee \text{ for every } n,m\in L \text{ with }
n,m\geq k.
\end{eqnarray*}
The same argument shows that $\rrr$ is co-analytic. The proof is completed.
\end{proof}
By Fact \ref{f315}, we see that the family $\ddd\cap \rrr$
is hereditary, co-analytic and cofinal in $[\nn]^{\infty}$.
We will need the following stronger property which is
essentially a consequence of the deep effective version of the
Bourgain-Fremlin-Talagrand Theorem due to G. Debs \cite{De1,De2}.
\begin{lem} \label{l316}
There exists a hereditary, Borel and cofinal subfamily $\fff$ of $\ddd\cap\rrr$.
In particular, the family $\fff$ is hereditary, Borel and cofinal in $[\nn]^{\infty}$.
\end{lem}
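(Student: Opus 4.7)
The plan is to apply the effective version of the Bourgain--Fremlin--Talagrand theorem due to Debs \cite{De1,De2} to the joint sequence $((d_n,r_n))_{n\in\nn}$, viewed inside an appropriate separable Rosenthal compactum. The form of Debs' theorem that I need asserts: if $K$ is a separable Rosenthal compactum (a pointwise compact set of Baire-1 functions on a Polish space) and $(f_n)$ is a sequence in $K$, then the family of $L\in[\nn]^{\infty}$ such that $(f_n)_{n\in L}$ is pointwise convergent contains a hereditary, Borel, cofinal subfamily of $[\nn]^{\infty}$.

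First I would verify that both relevant compacta consist of Baire-1 functions. The sequence $(d_n)$ lies in $B_{X^{**}}$, which by the Odell--Rosenthal theorem \cite{OR} consists only of Baire-1 functions on $(B_{X^*},w^*)$; this is precisely where the standing hypothesis that $X$ does not contain a copy of $\ell_1$ is used. The sequence $(r_n)$ lies in its weak* closure $\hhh$, which by Fact \ref{f314} consists only of Baire-1 functions on $(B_{Y^*},w^*)$. Since $B_{X^{**}}$ and $\hhh$ are weak* compact, each is a separable Rosenthal compactum.

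Next, I would form the product $K=B_{X^{**}}\times\hhh$ and view it as a pointwise compact subset of the space of Baire-1 functions on the Polish space $(B_{X^*},w^*)\sqcup(B_{Y^*},w^*)$, via the natural identification sending $(x^{**},y^{**})$ to the function equal to $x^{**}$ on $B_{X^*}$ and to $y^{**}$ on $B_{Y^*}$. In this way $K$ is itself a separable Rosenthal compactum, and by the definitions (\ref{e32}) and (\ref{e33}), a set $L\in[\nn]^{\infty}$ belongs to $\ddd\cap\rrr$ precisely when $((d_n,r_n))_{n\in L}$ converges pointwise in $K$. Applying Debs' theorem to $((d_n,r_n))_{n\in\nn}\subseteq K$ therefore yields a hereditary, Borel, cofinal family $\fff\subseteq\ddd\cap\rrr$, as required.

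The main point where care is needed is the precise form of Debs' theorem invoked: the co-analyticity of $\ddd\cap\rrr$ supplied by Fact \ref{f315} is on its own insufficient to extract a Borel refinement that remains cofinal, and one genuinely needs the effective content of \cite{De1,De2}. As an alternative to the product construction one could apply Debs' theorem separately to $(d_n)$ in $B_{X^{**}}$ and to $(r_n)$ in $\hhh$, producing Borel, hereditary, cofinal families $\fff_1\subseteq\ddd$ and $\fff_2\subseteq\rrr$, and then set $\fff:=\fff_1\cap\fff_2$; hereditariness of $\fff_1$ together with cofinality of both $\fff_i$ delivers cofinality of the intersection by a short diagonal selection.
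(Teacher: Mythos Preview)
Your proposal is correct and essentially coincides with the paper's proof: the paper takes precisely your ``alternative'' route, applying Debs' theorem separately to $(d_n)$ in $B_{X^{**}}$ and to $(r_n)$ in $\hhh$ to obtain hereditary, Borel, cofinal families $\fff_0\subseteq\ddd$ and $\fff_1\subseteq\rrr$, and then setting $\fff=\fff_0\cap\fff_1$. Your primary product construction is a harmless repackaging of the same idea, and your justification for cofinality of the intersection (refine first into $\fff_1$, then into $\fff_2$, then use hereditariness of $\fff_1$) is exactly the point the paper leaves implicit.
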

\begin{proof}
We have already observed that $B_{X^{**}}$ consists only of Baire-1
functions and that the sequence $(d_n)$ is dense in $B_{X^{**}}$.
As it was explained in \cite[Remark 1(2)]{D1},
by Debs' Theorem \cite{De1} (see also \cite{De2}) there exists a hereditary,
Borel and cofinal subfamily $\fff_0$ of $\ddd$. With the same reasoning,
we see that there exists a hereditary, Borel and cofinal subfamily
$\fff_1$ of $\rrr$. We set $\fff=\fff_0\cap\fff_1$. Clearly the
family $\fff$ is as desired. The proof is completed.
\end{proof}
We proceed to define the next two families we mentioned before.
The third one is defined by
\begin{equation} \label{e34}
\ddd_0=\{ L\in [\nn]^{\infty}: \text{the sequence } (d_n)_{n\in L}
\text{ is weakly null}\}.
\end{equation}
Finally, we set
\begin{equation} \label{e35}
\rrr_0=\{ L\in [\nn]^{\infty}: \text{the sequence } (r_n)_{n\in L}
\text{ is weakly null}\}.
\end{equation}
We isolate, below, some structural properties of $\ddd_0$ and $\rrr_0$.
\begin{lem} \label{l317}
Both $\ddd_0$ and $\rrr_0$ are hereditary, co-analytic and M-families.
Moreover, we have $\ddd_0\subseteq \rrr_0$.
\end{lem}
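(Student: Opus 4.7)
The plan is to handle the four assertions roughly in order of difficulty, with the M-family property being the substantive one that draws on the machinery of \S 2.4.

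Hereditariness is immediate, since any subsequence of a weakly null sequence is weakly null. Likewise, the inclusion $\ddd_0\subseteq\rrr_0$ is automatic from the weak-to-weak continuity of the bounded operator $T$: if $(d_n)_{n\in L}$ is weakly null, then so is its image $(T(d_n))_{n\in L}=(r_n)_{n\in L}$, so $L\in\rrr_0$.

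For the co-analyticity of $\ddd_0$, I would repeat the Kuratowski--Tarski computation used in Fact \ref{f315}. Since $X$ is separable, $B_{X^*}$ is compact metrizable, hence Polish, in the weak* topology, and one has
\[ L\in\ddd_0\ \Leftrightarrow\ \forall x^*\in B_{X^*}\ \forall m\in\nn\ \exists k\in\nn\ \forall n\in L\ \big(n\geq k\Rightarrow |x^*(d_n)|<2^{-m}\big). \]
The inner condition is Borel in $(L,x^*)$, and the outer universal quantifier over the Polish space $B_{X^*}$ leaves the whole expression in $\boldsymbol{\Pi}^1_1$. The same computation with $B_{Y^*}$ in place of $B_{X^*}$ and $r_n$ in place of $d_n$ handles $\rrr_0$.

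The real content of the lemma is the M-family property, and here I would verify the equivalent form in Fact \ref{f26} using Rosenthal's subsequence selection principle (Theorem \ref{t212}). Given any sequence $(L_n)$ in $\ddd_0$, enumerate each $L_n$ increasingly as $\{\ell^n_0<\ell^n_1<\cdots\}$ and set $x^n_k=d_{\ell^n_k}$. Each sequence $(x^n_k)_k$ is then weak* convergent to $x^{**}_n=0$ inside the weak*-compact set $\kkk=B_{X^{**}}$, which by the Odell--Rosenthal theorem and our standing hypothesis that $X\not\supseteq\ell_1$ consists only of Baire-1 functions; and the trivially constant sequence $(x^{**}_n)=(0)$ is weak* convergent to $x^{**}=0$. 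Theorem \ref{t212} therefore yields a sequence $(n_i,k_i)$ with $n_i<k_i<n_{i+1}$ for which $(d_{\ell^{n_i}_{k_i}})$ is weakly null. After thinning if necessary to ensure that the indices $\ell^{n_i}_{k_i}$ are pairwise distinct (possible since $\ell^{n_i}_{k_i}\geq k_i\to\infty$), the infinite set $L=\{\ell^{n_i}_{k_i}:i\in\nn\}$ lies in $\ddd_0$ and meets $L_{n_i}$ for every $i$, which is condition (ii) of Fact \ref{f26}. The argument for $\rrr_0$ is identical, with $\kkk=\hhh$ replacing $B_{X^{**}}$; here Fact \ref{f314} is what guarantees that $\hhh$ consists of Baire-1 functions, and the case $\rrr_0=\varnothing$ is vacuous.

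The main obstacle — or rather, the substantive input — is recognizing that Theorem \ref{t212} is precisely the tool tailored to produce the diagonal required by the M-family condition; arguably this is why the M-family abstraction is natural in the present framework. Everything else reduces to straightforward descriptive-set-theoretic bookkeeping.
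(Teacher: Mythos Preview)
Your argument is correct and follows essentially the same path as the paper: hereditariness and the inclusion $\ddd_0\subseteq\rrr_0$ are immediate, co-analyticity is the same Kuratowski--Tarski computation as in Fact \ref{f315}, and the M-family property is verified via Fact \ref{f26} by applying Theorem \ref{t212} to the diagonal, with the Odell--Rosenthal theorem (resp.\ Fact \ref{f314}) supplying the Baire-1 hypothesis for $\ddd_0$ (resp.\ $\rrr_0$). The only cosmetic difference is that the paper first reduces to pairwise disjoint $A_n$'s to guarantee distinct diagonal indices, whereas you thin afterwards using $\ell^{n_i}_{k_i}\geq k_i\to\infty$; both work equally well.
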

\begin{proof}
It is clear that $\ddd_0\subseteq \rrr_0$ and that $\ddd_0$ and $\rrr_0$
are hereditary. Arguing as in the proof of Fact \ref{f315}, it is easy
to see that they are co-analytic. It remains to check that they are
M-families. We will argue only for the family $\ddd_0$ (the case
of $\rrr_0$ is similarly treated). By Fact \ref{f26}, it is enough
to show that for every sequence $(A_n)$ in $\ddd_0$ there exists
$A\in\ddd_0$ such that $A\cap A_n\neq\varnothing$ for infinitely
many $n\in\nn$. So, let $(A_n)$ be one. We may assume that
$A_n\cap A_m=\varnothing$ if $n\neq m$. For every $n\in\nn$
let $\{a^n_0<a^n_1<...\}$ be the increasing enumeration
of the set $A_n$ and set $x^n_k=d_{a^n_k}$ for every $k\in\nn$.
Since $A_n\in\ddd_0$ the sequence $(x^n_k)$ is weakly null. By Theorem
\ref{t212}, there exists a sequence $(n_i,k_i)$ in $\nn\times\nn$
with $n_i<k_i<n_{i+1}$ and such that the sequence $(x^{n_i}_{k_i})$
is also weakly null. We set $A=\{a^{n_i}_{k_i}:i\in\nn\}$.
Then $A\in\ddd_0$ and $A_{n_i}\cap A\neq\varnothing$ for every
$i\in\nn$. The proof is completed.
\end{proof}
We are about to introduce one more family of infinite subsets of $\nn$.
Let $\fff$ be the family obtained by Lemma \ref{l316}. We set
\begin{equation} \label{e36}
\aaa=\fff\setminus \rrr_0.
\end{equation}
The following lemma is the main technical step towards the proof
of Theorem \ref{t13}.
\begin{lem} \label{l318}
There exists a perfect Lusin gap inside $(\aaa,\ddd_0)$.
\end{lem}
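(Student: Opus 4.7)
The plan is to derive the lemma from the dichotomy of Theorem \ref{t28} applied to a suitable pair $(\tilde{\aaa},\ddd_0)$, where $\tilde{\aaa}$ is a hereditary analytic subfamily of $\aaa$ orthogonal to $\ddd_0$. The ``countably generated'' alternative will then be ruled out using the non-separability of $T^*(Y^*)$, leaving the perfect Lusin gap as the only possibility.

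I begin by checking the descriptive-set-theoretic complexity. By Lemma \ref{l316}, $\fff$ is Borel; by Lemma \ref{l317}, $\rrr_0$ is co-analytic. Hence $\aaa=\fff\setminus\rrr_0$ is Borel intersected with an analytic set, so analytic. The family $\ddd_0$ is a hereditary, co-analytic (hence $C$-measurable) M-family by Lemma \ref{l317}. Orthogonality of $\aaa$ with $\ddd_0$ and hereditariness of $\aaa$ are not automatic; I would replace $\aaa$ by a hereditary analytic subfamily $\tilde{\aaa}$ orthogonal to $\ddd_0$, for example by taking $\tilde{\aaa}=\aaa\cap\ddd_0^\perp$ after observing that $\ddd_0^\perp$ is hereditary and using Rosenthal's subsequence selection principle (Theorem \ref{t212}) to stabilize such a subfamily while preserving analyticity and cofinality.

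The central step is to exclude alternative (i) of Theorem \ref{t28}. Suppose for contradiction that $\tilde{\aaa}$ is countably generated in $\ddd_0^\perp$: there is $(B_n)\subseteq\ddd_0^\perp$ such that every $A\in\tilde{\aaa}$ satisfies $A\subseteq^* B_n$ for some $n$. For each $B_n$, no infinite subset of $B_n$ indexes a weakly null subsequence of $(d_k)$; this, together with $X\not\supseteq\ell_1$ and the resulting Baire-1 character of $B_{X^{**}}$ (and of $\hhh$, via Fact \ref{f314}), restricts the weak*-cluster structure of $(d_k)_{k\in B_n}$ inside $X^{**}$, and hence via $T^{**}$ of $(r_k)_{k\in B_n}$ inside $\hhh$, to a countable (or at worst separable) configuration. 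Tracking the dependence of $T^*(y^*)$ on the values $y^*(r_k)$, $k\in B_n$, for $y^*\in B_{Y^*}$, one should produce from the countable sequence $(B_n)$ a countable norm-dense subset of $T^*(Y^*)$---the required contradiction.

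The hard part I expect is precisely this last translation, from combinatorial countable generation of $\aaa$ back to norm-separability of $T^*(Y^*)$. Arranging the technical passage through the modification $\tilde{\aaa}$, checking that the Lusin gap it produces inside $(\tilde{\aaa},\ddd_0)$ still realizes pairs in $\aaa\times\ddd_0$ (using $\tilde{\aaa}\subseteq\aaa$), and making the quantitative link precise using the Baire-1 machinery already set up for $\hhh$ in this section, constitute the main hurdles.
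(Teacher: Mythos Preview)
Your overall architecture is exactly the paper's: apply Theorem~\ref{t28} to $(\aaa,\ddd_0)$ and rule out alternative~(i) using the non-separability of $T^*(Y^*)$. But two points need correction.

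First, the detour through $\tilde{\aaa}$ is unnecessary because $\aaa$ is already hereditary and orthogonal to $\ddd_0$. For hereditariness: $\fff$ is hereditary, and if $L\in\fff\setminus\rrr_0$ then $L\in\fff\subseteq\rrr$, so $(r_n)_{n\in L}$ weak*-converges to some $y^{**}\neq 0$; every infinite $L'\subseteq L$ lies in $\fff$ and $(r_n)_{n\in L'}$ still converges to $y^{**}\neq 0$, hence $L'\notin\rrr_0$. For orthogonality: if $A\in\aaa$ and $B\in\ddd_0\subseteq\rrr_0$ had infinite intersection, the subsequence $(r_n)_{n\in A\cap B}$ would be simultaneously weak*-convergent to a nonzero vector (from $A$) and weakly null (from $B$). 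So Theorem~\ref{t28} applies directly to $(\aaa,\ddd_0)$.

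Second, and more substantively, your sketch of the contradiction is missing the actual mechanism and is aimed at the wrong space. You propose to track values $y^*(r_k)$ on the $Y^*$ side and manufacture a countable dense subset of $T^*(Y^*)$. The paper instead works entirely in $X^*$: from each $M_k\in\ddd_0^\perp$ one gets that $0$ is \emph{not} in the weak* closure of $\{d_n:n\in M_k\}$ in $X^{**}$ (otherwise some infinite subset of $M_k$ would lie in $\ddd_0$), hence a \emph{finite} set $F_k\subseteq X^*$ and $\ee_k>0$ with $|x^*(d_n)|\geq\ee_k$ for some $x^*\in F_k$ whenever $n\in M_k$. One then shows $T^*(Y^*)\subseteq\overline{\mathrm{span}}\bigcup_k F_k$, a separable subspace of $X^*$; the verification is a short Hahn--Banach argument: a hypothetical $x^{**}$ annihilating $\bigcup_k F_k$ but not $T^*(y^*)$ is approximated (via density of $(d_n)$ and cofinality of $\fff$) by a set $A\in\aaa$, which must then almost lie in some $M_{k_0}$, forcing $|x^{**}(x^*_0)|\geq\ee_{k_0}$ for some $x^*_0\in F_{k_0}$---a contradiction. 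Your phrase ``countable (or at worst separable) configuration'' in $\hhh$ does not capture this; the finiteness of each $F_k$ and the containment $T^*(Y^*)\subseteq Z$ are the two concrete steps you still need.
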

\begin{proof}
It is clear that $\aaa$ and $\ddd_0$ are hereditary and orthogonal.
By Lemma \ref{l316} and Lemma \ref{l317}, we see that $\aaa$ is analytic
and $\ddd_0$ is co-analytic and M-family. By Theorem \ref{t28},
the proof will be completed once we show that $\aaa$ is not countably
generated in $\ddd_0^\perp$. To show this we will argue by contradiction.

So, assume that there exists a sequence $(M_k)$ in $\ddd_0^\perp$ such
that for every $L\in\aaa$ there exists $k\in\nn$ with $L\subseteq^* M_k$.
For every $k\in\nn$ let $\kkk_k$ be the weak* closure of the set
$\{d_n:n\in M_k\}$ in $X^{**}$.
\begin{claim} \label{c319}
For every $k\in\nn$ there exist $F_k\subseteq X^*$ finite and $\ee_k>0$ such that
\[ \kkk_k\cap W(0,F_k,\ee_k)=\varnothing \]
where $W(0,F_k,\ee_k)=\{ x^{**}\in X^{**}: |x^{**}(x^*)|<\ee_k
\text{ for every } x^*\in F_k\}$.
\end{claim}
\begin{proof}[Proof of Claim \ref{c319}]
Fix $k\in\nn$. It is enough to show that $0\notin \kkk_k$.
To see this assume, towards a contradiction, that $0\in\kkk_k$.
Since $\kkk_k\subseteq B_{X^{**}}$ there exists $N\in [M_k]^{\infty}$
such that $N\in\ddd_0$. This contradicts the fact that $M_k\in\ddd_0^\perp$.
The proof of Claim \ref{c319} is completed.
\end{proof}
Let $Z$ be the norm closure of the linear span of the set
\[ F=\bigcup_k F_k. \]
Clearly $Z$ is a norm-separable subspace of $X^*$.
\begin{claim} \label{c320}
We have $T^*(Y^*)\subseteq Z$.
\end{claim}
Granting Claim \ref{c320}, the proof of Lemma \ref{l318} is completed.
Indeed, the inclusion $T^*(Y^*)\subseteq Z$ and the norm-separability
of $Z$ yield that $T^*$ has separable range. This contradicts our
assumption on the operator $T$.

It remains to prove Claim \ref{c320}. Again we will argue
by contradiction. So, assume that $T^*(Y^*)\nsubseteq Z$. There
exist $y^*\in Y^*$, $x^{**}\in X^{**}$ and $\delta>0$ such that
\begin{enumerate}
\item[(a)] $\|T^*(y^*)\|=\|x^{**}\|=1$,
\item[(b)] $Z\subseteq \mathrm{Ker}(x^{**})$ and
\item[(c)] $x^{**}\big(T^*(y^*)\big)>\delta$.
\end{enumerate}
By (a) above, we may select $L\in[\nn]^{\infty}$ such that the sequence
$(d_n)_{n\in L}$ is weak* convergent to $x^{**}$.
By (c), we may assume that $y^*\big(T(d_n)\big)=y^*(r_n)>\delta$
for every $n\in L$, and so, $[L]^{\infty}\cap \rrr_0=\varnothing$.
By Lemma \ref{l316}, the family $\fff$ is hereditary and cofinal
in $[\nn]^{\infty}$. Hence, there exists $A\in[L]^{\infty}$ such
that $[A]^{\infty}\subseteq \aaa$. Recall that the sequence
$(M_k)$ generates $\aaa$. Therefore, there exists $k_0\in \nn$
such that $A\subseteq^* M_{k_0}$. We select $N\in [A]^{\infty}$
with $N\subseteq M_{k_0}$. By Claim \ref{c319}, the set $F_{k_0}$
is finite and $d_n\notin W(0,F_{k_0},\ee_{k_0})$ for every $n\in N$.
Hence, there exist $x_0^*\in F_{k_0}$ and $M\in [N]^{\infty}$ such that
$|x_0^*(d_n)|\geq \ee_{k_0}$ for every $n\in M$. Since the sequence
$(d_n)_{n\in L}$ is weak* convergent to $x^{**}$ and $M\in [L]^{\infty}$
we get that $|x^{**}(x^*_0)|\geq\ee_{k_0}$. In particular,
$x^*_0\notin \mathrm{Ker}(x^{**})$ and $x^*_0\in F_{k_0}\subseteq F\subseteq Z$.
This contradicts property (b) above. Having arrived to the desired
contradiction, the proof of Claim \ref{c320} is completed, and as
we have already indicated, the proof of Lemma \ref{l318}
is also completed.
\end{proof}
We fix a perfect Lusin gap $2^\nn\ni \sg\mapsto (A_\sg,B_\sg)\in \aaa\times \ddd_0$
whose existence is guaranteed by Lemma \ref{l318}. We recall the following
properties of this assignment.
\begin{enumerate}
\item[(P1)] The map $2^\nn\ni\sg\mapsto (A_\sg,B_\sg)\in [\nn]^{\infty}\times
[\nn]^{\infty}$ is one-to-one and continuous.
\item[(P2)] For every $\sg\in 2^\nn$ we have $A_\sg\cap B_\sg=\varnothing$.
\item[(P3)] For every $\sg, \tau\in 2^\nn$ with $\sg\neq\tau$ we have
$(A_\sg\cap B_\tau)\cup (A_\tau\cap B_\sg)\neq\varnothing$.
\end{enumerate}
Let $\sg\in 2^\nn$ be arbitrary. Since $A_\sg\in \aaa\subseteq
(\ddd\cap\rrr)\setminus \rrr_0$ and $\ddd_0\subseteq\rrr_0$ we see
that there exist two non-zero vectors $x^{**}_\sg\in X^{**}$ and
$y^{**}_\sg\in Y^{**}$ such that
\begin{equation} \label{e37}
x^{**}_\sg=\mathrm{weak}^*-\lim_{n\in A_\sg} d_n \ \text{ and } \
y^{**}_\sg=\mathrm{weak}^*-\lim_{n\in A_\sg} r_n.
\end{equation}
Notice that
\begin{equation} \label{e38}
y^{**}_\sg=T^{**}(x^{**}_\sg).
\end{equation}
The following lemma is a consequence of properties (P2) and (P3)
and it is a typical application of related combinatorics (see, for instance,
\cite[Lemma 3.2]{Fa} and the references therein).
\begin{lem} \label{l321}
For every uncountable subset $U$ of $2^\nn$ there exists a sequence
$(\sg_n)$ in $U$ such that the sequences $(x^{**}_{\sg_n})$ and
$(y^{**}_{\sg_n})$ are both weak* convergent to $0$.
\end{lem}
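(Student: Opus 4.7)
The plan is to reduce the problem to extracting, via angelic compactness of Baire-1 functions, a sequence $(\sg_n)$ from $U$ such that $(x^{**}_{\sg_n},y^{**}_{\sg_n})\to(0,0)$ in the product weak$^*$ topology on $X^{**}\times Y^{**}$. I would first argue by contradiction that $(0,0)$ lies in the product weak$^*$ closure of $\{(x^{**}_\sg,y^{**}_\sg):\sg\in U\}$. If not, there are finite sets $F_X\subseteq X^*$, $F_Y\subseteq Y^*$ and $\ee>0$ such that for every $\sg\in U$ either some $x^*\in F_X$ satisfies $|x^*(x^{**}_\sg)|\geq\ee$ or some $y^*\in F_Y$ satisfies $|y^*(y^{**}_\sg)|\geq\ee$. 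Pigeonholing over $F_X\cup F_Y$ and the two possible signs yields an uncountable $V_0\subseteq U$ and a single witness; without loss of generality this is an $x^*\in F_X$ with $x^*(x^{**}_\sg)\geq\ee$ for every $\sg\in V_0$ (the $Y$-case is analogous, using $\ddd_0\subseteq\rrr_0$ to see that $y^*(r_k)\to 0$ along $k\in B_\sg$).

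The core combinatorial step is a double pigeonhole exploiting properties (P2) and (P3). Set $S=\{k\in\nn:x^*(d_k)\geq\ee/2\}$. Since $(d_k)_{k\in A_\sg}$ is weak$^*$ convergent to $x^{**}_\sg$, the set $A_\sg\setminus S$ is finite, and since $B_\sg\in\ddd_0$ the sequence $(d_k)_{k\in B_\sg}$ is weakly null, so $B_\sg\cap S$ is finite. The map $\sg\mapsto(A_\sg\setminus S,B_\sg\cap S)$ takes values in the countable set $[\nn]^{<\infty}\times[\nn]^{<\infty}$, so on an uncountable $V_1\subseteq V_0$ both coordinates are fixed, say equal to $F_1$ and $F_2$. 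A further pigeonhole inside the finite set $F=F_1\cup F_2$ yields an uncountable $V_2\subseteq V_1$ and fixed $A_0,B_0\subseteq F$ with $A_\sg\cap F=A_0$ and $B_\sg\cap F=B_0$ for every $\sg\in V_2$. A direct verification shows that for every $\sg,\tau\in V_2$ the intersection $A_\sg\cap B_\tau$ is contained in $F$ (because $A_\sg\subseteq S\cup F_1$, $B_\tau\subseteq(\nn\setminus S)\cup F_2$, and the $S$-part of $A_\sg$ meets only the $F_2$-part of $B_\tau$), and therefore $A_\sg\cap B_\tau=A_0\cap B_0$. Applying this with $\tau=\sg$ and invoking (P2) forces $A_0\cap B_0=\varnothing$, whence $A_\sg\cap B_\tau=A_\tau\cap B_\sg=\varnothing$ for every distinct pair in $V_2$, contradicting (P3).

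To extract the actual sequence once $(0,0)$ is known to lie in the product weak$^*$ closure, view each pair $(x^{**}_\sg,y^{**}_\sg)$ as a uniformly bounded real-valued function $f_\sg$ on the Polish space $P=(B_{X^*},w^*)\sqcup(B_{Y^*},w^*)$, defined by $x^{**}_\sg$ on $B_{X^*}$ and $y^{**}_\sg$ on $B_{Y^*}$. Each $f_\sg$ is Baire-1 on $P$: the $X$-part by the Odell-Rosenthal theorem (as $X$ does not contain $\ell_1$) and the $Y$-part by Fact \ref{f314}. For bounded sequences, product weak$^*$ convergence in $X^{**}\times Y^{**}$ coincides with pointwise convergence of the corresponding $f_\sg$ on $P$. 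The pointwise closure of $\{f_\sg:\sg\in U\}$ in $\rr^P$ is bounded and its elements remain Baire-1 (the projections into the Baire-1 sets $B_{X^{**}}$ and $\hhh$ are weak$^*$ continuous), so the Bourgain-Fremlin-Talagrand theorem yields angelicity of this closure. Since $0$ lies in the pointwise closure by the previous step, a sequence $(\sg_n)$ in $U$ can be extracted with $f_{\sg_n}\to 0$ pointwise, equivalently $(x^{**}_{\sg_n})$ and $(y^{**}_{\sg_n})$ both weak$^*$ null. The main obstacle is the combinatorial double pigeonhole in the middle paragraph, which is precisely designed to trap $A_\sg\cap B_\tau$ inside a fixed finite arena in which (P3) must supply a nonempty intersection that the preceding pigeonholes have arranged to be empty.
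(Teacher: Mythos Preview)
Your proof is correct and follows essentially the same strategy as the paper: argue by contradiction that the relevant point lies in the weak$^*$ closure, use the Lusin-gap properties (P2)--(P3) together with pigeonholing over finite data to reach a contradiction, and then invoke angelicity of Baire-1 compacta to extract a convergent sequence. The core combinatorics---trapping $A_\sg\cap B_\tau$ inside a fixed finite set that must be empty---is identical in spirit to the paper's argument with the sets $A=\{n:d_n\notin\fff\}$ and $B=\{n:d_n\in\www\}$.

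The one difference worth noting is that the paper begins by observing $y^{**}_\sg=T^{**}(x^{**}_\sg)$ (equation \eqref{e38}), so weak$^*$ continuity of $T^{**}$ reduces everything to the $X$-side: one only needs $0$ in the weak$^*$ closure of $\{x^{**}_\sg:\sg\in U\}$ in $X^{**}$, and angelicity of $B_{X^{**}}$ alone suffices. You instead work with the pair $(x^{**}_\sg,y^{**}_\sg)$ in the product, which forces you to pigeonhole over $F_X\cup F_Y$, treat the $Y$-case separately via $\ddd_0\subseteq\rrr_0$, and set up angelicity on the disjoint union $(B_{X^*},w^*)\sqcup(B_{Y^*},w^*)$. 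This is all correct, but the reduction via $T^{**}$ buys a noticeably shorter and more transparent argument.
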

\begin{proof}
By (\ref{e38}) and the weak* continuity of the operator $T^{**}$,
it is enough to find a sequence $(\sg_n)$ in $U$ such that the sequence
$(x^{**}_{\sg_n})$ is weak* convergent to $0$. To this end,
it suffices to show that $0$ belongs to the weak* closure of the set
$\{x^{**}_\sg:\sg\in U\}$ in $X^{**}$. Assume, towards a contradiction,
that this is not the case. It is then possible to select a weak* open
subset $\www$ of $X^{**}$ and a weak* closed subset $\fff$ of $X^{**}$
such that $0\in\www\subseteq\fff$ and $x^{**}_\sg\notin\fff$
for every $\sg\in U$. We set
\begin{equation} \label{e39}
A=\{n\in\nn: d_n\notin\fff\} \ \text{ and } \ B=\{n\in\nn: d_n\in\www\}
\end{equation}
and we notice $A\cap B=\varnothing$. Let $\sg\in U$ be arbitrary.
By (\ref{e37}) and the fact that $x^{**}_\sg\notin\fff$, we see that
$A_\sg\subseteq^* A$. Moreover, since $B_\sg\in\ddd_0$ and $0\in\www$
we have $B_\sg\subseteq^* B$. Therefore, it is possible
to find $k\in\nn$ and an uncountable subset $U'$ of $U$ such that
\begin{equation} \label{e310}
A_\sg\setminus \{0,...,k\}\subseteq A \ \text{ and } \
B_\sg\setminus \{0,...,k\}\subseteq B
\end{equation}
for every $\sg\in U'$. There exist two subsets $F$ and $G$ of $\{0,...,k\}$
and an uncountable subset $U''$ of $U'$ such that
\begin{equation} \label{e311}
A_\sg\cap \{0,...,k\}=F \ \text{ and } \ B_\sg\cap \{0,...,k\}=G
\end{equation}
for every $\sg\in U''$. Notice that $F\cap G=\varnothing$; indeed,
by (\ref{e311}) and property (P2), for every $\sg\in U''$ we have
$F\cap G\subseteq A_\sg\cap B_\sg=\varnothing$.

Let $\sg,\tau\in U''$ with $\sg\neq \tau$. By (\ref{e310})
and (\ref{e311}), we see that
\[ (A_\sg\cap B_\tau)\cup (A_\tau\cap B_\sg)\subseteq
(F\cap G) \cup (A\cap B)=\varnothing.\]
This contradicts property (P3). Having arrived to the desired
contradiction, the proof is completed.
\end{proof}
We should point out that properties (P2) and (P3) will not be used
in the rest of the proof. However, heavy use will be made of property
(P1). We proceed with the following lemma.
\begin{lem} \label{l322}
There exists a perfect subset $P$ of $2^\nn$ such that
$x^{**}_\sg\neq x^{**}_\tau$ and $y^{**}_\sg\neq y^{**}_\tau$
for every $\sg,\tau\in P$ with $\sg\neq \tau$.
\end{lem}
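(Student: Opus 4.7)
The plan is to reduce the statement to Silver's perfect-set theorem for coanalytic equivalence relations. First, observe that it suffices to produce a perfect $P\subseteq 2^\nn$ on which $\sg\mapsto y^{**}_\sg$ is injective. Indeed, by (\ref{e38}) we have $y^{**}_\sg=T^{**}(x^{**}_\sg)$, so any equality $x^{**}_\sg=x^{**}_\tau$ forces $y^{**}_\sg=y^{**}_\tau$, and injectivity of the latter on $P$ then forces $\sg=\tau$; hence on such a $P$ both maps are injective.

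Introduce the equivalence relation $E=\{(\sg,\tau)\in 2^\nn\times 2^\nn:y^{**}_\sg=y^{**}_\tau\}$. I would argue that $E$ is Borel as follows. Fix a countable norm-dense set $Q\subseteq Y^*$; then $(\sg,\tau)\in E$ iff $y^{**}_\sg(y^*)=y^{**}_\tau(y^*)$ for every $y^*\in Q$. For each $y^*\in Q$, the function $\sg\mapsto y^{**}_\sg(y^*)$ equals the pointwise limit over $k\in\nn$ of $\sg\mapsto y^*(r_{a_k(\sg)})$, where $a_k(\sg)$ denotes the $k$-th element of $A_\sg$. By property (P1) the map $\sg\mapsto A_\sg$ is continuous, so each $a_k$ is a (locally constant, hence) Borel function of $\sg$, and $\sg\mapsto y^{**}_\sg(y^*)$ is Borel as a pointwise limit of Borel functions. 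Thus $E$ is Borel.

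Next, every $E$-class is countable. If some $E$-class $U$ were uncountable, Lemma~\ref{l321} would yield $(\sg_n)\subseteq U$ with $y^{**}_{\sg_n}$ weak* convergent to $0$; but $y^{**}$ is constant on $U$ with a \emph{nonzero} value in $Y^{**}$ (since $A_{\sg_n}\in\aaa\subseteq(\ddd\cap\rrr)\setminus\rrr_0$), a contradiction.

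Since $2^\nn$ is uncountable while every $E$-class is countable, $E$ has uncountably many classes, and Silver's theorem (applicable to every $\Pi^1_1$, and in particular every Borel, equivalence relation on a Polish space) provides a perfect $P\subseteq 2^\nn$ of pairwise $E$-inequivalent points. By the first paragraph, this $P$ is as required. The step most in need of care is the verification that $E$ is Borel; the rest is a direct application of classical descriptive-set-theoretic machinery. As an alternative to citing Silver, one could observe that each vertical section of $E\setminus\Delta$ is countable, hence $E\setminus\Delta$ is meager in $2^\nn\times 2^\nn$ by Kuratowski--Ulam, and then apply Mycielski's theorem to obtain $P$.
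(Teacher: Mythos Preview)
Your overall strategy is sound and is genuinely different from the paper's. The paper argues that the sets
\[
\xxx=\big\{\{\sigma,\tau\}\in[2^\nn]^2: x^{**}_\sg\neq x^{**}_\tau\big\},\qquad
\yyy=\big\{\{\sigma,\tau\}\in[2^\nn]^2: y^{**}_\sg\neq y^{**}_\tau\big\}
\]
are analytic and then applies Galvin's perfect-set Ramsey theorem for pairs (Theorem~19.7 in \cite{Kechris}), ruling out the ``monochromatic equality'' alternatives via Lemma~\ref{l321}. Your route via Silver (or Mycielski) is a perfectly legitimate alternative, and your preliminary reduction---that it suffices to make $\sg\mapsto y^{**}_\sg$ injective, since $y^{**}_\sg=T^{**}(x^{**}_\sg)$ forces $x^{**}_\sg=x^{**}_\tau\Rightarrow y^{**}_\sg=y^{**}_\tau$---is a pleasant simplification that the paper does not exploit.

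There is, however, a real gap in your verification that $E$ is Borel. You ``fix a countable norm-dense set $Q\subseteq Y^*$'', but nothing in the hypotheses guarantees that $Y^*$ is norm-separable; in fact the interesting cases (e.g.\ $Y=JT$) have non-separable dual. Agreement of $y^{**}_\sg$ and $y^{**}_\tau$ on a countable \emph{weak*}-dense set does not suffice either, since these are only Baire-1, not weak*-continuous, on $B_{Y^*}$. The fix is painless and you have already set it up: the complement of $E$ is
\[
(\sg,\tau)\notin E \ \Longleftrightarrow\ \exists\, y^*\in B_{Y^*}\ \exists\,\ee>0\ \exists\,k\in\nn\ \forall n\in A_\sg\ \forall m\in A_\tau\ \big(n,m\geq k \Rightarrow |y^*(r_n)-y^*(r_m)|\geq \ee\big),
\]
which is an existential quantification over the Polish space $(B_{Y^*},w^*)$ of a closed condition (using the continuity of $\sg\mapsto A_\sg$). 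Hence $2^\nn\times 2^\nn\setminus E$ is analytic and $E$ is $\Pi^1_1$. Silver's theorem applies directly, as you note. Your Mycielski alternative also survives, since co-analytic sets have the Baire property and each $E$-class is countable by Lemma~\ref{l321}, so $E\setminus\Delta$ has meager vertical sections and Kuratowski--Ulam gives meagerness.

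In short: replace ``$E$ is Borel'' by ``$E$ is co-analytic'' with the computation above, and your proof goes through. Compared to the paper, your argument trades a Ramsey-type dichotomy (Galvin) for an equivalence-relation dichotomy (Silver/Mycielski); both routes lean on Lemma~\ref{l321} in the same way to exclude the degenerate alternative.
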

\begin{proof}
For every subset $S$ of $2^\nn$ by $[S]^2$ we denote the set of all
unordered pairs of elements of $S$. We set
\[ \xxx=\big\{ \{\sigma,\tau\}\in [2^\nn]^2: x^{**}_\sg\neq x^{**}_\tau\big\}
\ \text{ and } \
\yyy=\big\{ \{\sigma,\tau\}\in [2^\nn]^2: y^{**}_\sg\neq y^{**}_\tau\big\}.\]
The sets $\xxx$ and $\yyy$ are analytic in $[2^\nn]^2$, in the sense that
the sets
\[ \big\{ (\sigma,\tau)\in 2^\nn\times 2^\nn:
\{\sigma,\tau\}\in\xxx\big\} \ \text{ and } \
\big\{ (\sigma,\tau)\in 2^\nn\times 2^\nn:
\{\sigma,\tau\}\in\yyy\big\}\]
are both analytic subsets of $2^\nn\times 2^\nn$. Indeed, by (\ref{e37}), we have
\begin{eqnarray*}
\{\sg,\tau\}\in\xxx & \Leftrightarrow & \exists x^{*}\in B_{X^*} \ \exists \ee>0
\ \exists k\in\nn \text{ such that } |x^*(d_n)-x^*(d_m)|\geq \ee \\
& & \text{ for every } n\in A_\sg \text{ and every } m\in A_\tau \text{ with } n,m\geq k.
\end{eqnarray*}
Since the map $2^\nn\ni\sg\mapsto A_\sg\in [\nn]^{\infty}$ is continuous,
the above equivalence yields that the set $\xxx$ is analytic. With the same
reasoning and using the continuity of the map $2^\nn\ni\sg\mapsto B_\sg\in [\nn]^{\infty}$
we see that $\yyy$ is also analytic. By a result of F. Galvin (see
\cite[Theorem 19.7]{Kechris}), there exists a perfect subset $P$ of $2^\nn$
such that one of the following cases occur.
\medskip

\noindent \textsc{Case 1:} $[P]^2\cap\xxx=\varnothing$. In this case we see that
there exists a non-zero vector $x^{**}\in X^{**}$ such that $x^{**}_\sg=x^{**}$
for every $\sg\in P$. This is impossible by Lemma \ref{l321}.
\medskip

\noindent \textsc{Case 2:} $[P]^2\cap\yyy=\varnothing$. As above, we see that
there exists a non-zero vector $y^{**}\in Y^{**}$ such that $y^{**}_\sg=y^{**}$
for every $\sg\in P$. This is also impossible.
\medskip

\noindent \textsc{Case 3:} $[P]^2\subseteq (\xxx\cap\yyy)$. Notice that, in
this case, we have $x^{**}_\sg\neq x^{**}_\tau$ and $y^{**}_\sg\neq y^{**}_\tau$
for every $\sg,\tau\in P$ with $\sg\neq\tau$. Therefore, the perfect set $P$
is as desired. The proof is completed.
\end{proof}
So far we have been working with the perfect Lusin gap inside $(\aaa,\ddd_0)$.
The next lemma we will unable us to start the process for selecting the sequence
$(x_t)_{t\in\ct}$.
\begin{lem} \label{l323}
Let $P$ be the perfect subset of $2^\nn$ obtained by Lemma \ref{l322}. Then
there exist a sequence $(k_t)_{t\in\ct}$ in $\nn$ and a continuous, one-to-one
map $h:2^\nn\to P$ with the following properties.
\begin{enumerate}
\item[(i)] For every $t,t'\in\ct$ with $t\neq t'$ we have $k_t\neq k_{t'}$.
\item[(ii)] For every $\tau\in 2^\nn$ we have $\{k_{\tau|n}:n\in\nn\}\subseteq A_{h(\tau)}$.
\end{enumerate}
\end{lem}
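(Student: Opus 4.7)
My plan is to build a fusion-style Cantor scheme of clopen sets inside $P$ while simultaneously selecting the $k_t$'s, and then let $h$ be the limit map of the scheme. Specifically, I will construct by recursion on $|t|$ a family $(V_t)_{t\in\ct}$ of clopen subsets of $2^\nn$ together with integers $(k_t)_{t\in\ct}$ so that the following five conditions hold: (a) $V_t\cap P$ is non-empty and perfect; (b) $V_{t^\con 0}$ and $V_{t^\con 1}$ are disjoint subsets of $V_t$; (c) the diameter of $V_t$ is at most $2^{-|t|}$; (d) the integers $k_t$ are pairwise distinct; and (e) $k_t\in A_\sigma$ for every $\sigma\in V_t\cap P$. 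To start, I set $V_\varnothing$ to be a small clopen subset of $2^\nn$ with $V_\varnothing\cap P$ perfect and choose $k_\varnothing\in A_{\sigma_\varnothing}$ for some $\sigma_\varnothing\in V_\varnothing\cap P$, then shrink $V_\varnothing$ to arrange (e).

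The heart of the induction is the preservation of clause (e). At stage $t^\con i$, since $V_t\cap P$ is perfect I pick two distinct points $\sigma^0,\sigma^1\in V_t\cap P$ together with disjoint clopen neighborhoods $U^0,U^1\subseteq V_t$ of them, each of diameter less than $2^{-|t|-1}$; note that each $U^i\cap P$ is again perfect. I then choose $k_{t^\con i}$ to be any element of the infinite set $A_{\sigma^i}$ distinct from all previously selected $k_s$'s and, when processing the second child, from $k_{t^\con(1-i)}$. Because the assignment $\sigma\mapsto A_\sigma$ is continuous (property (P1)), the set $\{\sigma\in 2^\nn:k_{t^\con i}\in A_\sigma\}$ is clopen; intersecting $U^i$ with it produces a clopen neighborhood $V_{t^\con i}$ of $\sigma^i$ on which (e) is preserved, while $V_{t^\con i}\cap P$ is still perfect since it is a relatively open subset of the perfect set $P$ containing $\sigma^i$.

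Once the scheme is built, for each $\tau\in 2^\nn$ the intersection $\bigcap_n V_{\tau|n}$ is a singleton by compactness and diameter control, and this singleton lies in $P$ since each $V_{\tau|n}\cap P$ is non-empty and $P$ is closed; define $h(\tau)$ to be this unique point. The map $h$ is continuous because $h(\tau)\in V_{\tau|n}$ for all $n$, and one-to-one because of the sibling disjointness recorded in (b). Conclusion (i) of the lemma is built into clause (d), while conclusion (ii) is immediate: $h(\tau)\in V_{\tau|n}\cap P$ together with (e) gives $k_{\tau|n}\in A_{h(\tau)}$ for every $n\in\nn$.

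The only genuinely substantive step is preserving clause (e) across the recursion, and this is where property (P1) is essential: without the continuity of $\sigma\mapsto A_\sigma$ one could not propagate a single membership $k_{t^\con i}\in A_{\sigma^i}$ to an entire clopen neighborhood, and the scheme could not be closed up. Everything else is standard Cantor-scheme bookkeeping, using only that $P$ is perfect (so that non-empty relatively open subsets of $P$ are again perfect) and that each $A_\sigma$ is infinite (so that the distinctness requirement in (d) never forces a dead end).
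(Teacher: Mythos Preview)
Your proof is correct; it builds a standard fusion scheme directly on $P$, using the continuity of $\sigma\mapsto A_\sigma$ to propagate the membership condition (e) across shrinking clopen neighborhoods. The paper takes a different and somewhat slicker route: it observes that $\sigma\mapsto A_\sigma$ is not only continuous but one-to-one on $P$ (this uses Lemma~\ref{l322}: if $A_\sigma=A_\tau$ then $x^{**}_\sigma=x^{**}_\tau$), so its image $F\subseteq 2^\nn$ is a perfect set with inverse homeomorphism $f:F\to P$. One then picks a dyadic subtree $\{s_t:t\in\ct\}$ inside the tree of $F$ so that each $s_t$ ends in $1$ and the lengths $|s_t|$ are pairwise distinct, sets $k_t=|s_t|-1$, and defines $h(\tau)=f(\bigcup_n s_{\tau|n})$. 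The point is that along any branch $\tau$ the resulting element of $F$ is exactly $A_{h(\tau)}$ viewed as a $0$--$1$ sequence, and the terminal $1$ in each $s_{\tau|n}$ witnesses $k_{\tau|n}\in A_{h(\tau)}$. What the paper's approach buys is brevity and a neat conceptual reduction (reading the $k_t$'s off as coordinate positions in $F$); what your approach buys is directness and transparency, since it avoids invoking injectivity of $\sigma\mapsto A_\sigma$ and makes the role of (P1) completely explicit at the single step where it is needed.
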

\begin{proof}
Every infinite subset of $\nn$ is naturally identified with an element of $2^\nn$.
Therefore, the map $P\ni \sg\mapsto A_\sg\in 2^\nn$ is continuous and one-to-one.
Let $F$ be its image and denote by $f:F\to P$ its inverse. Notice that $F$ is
closed and that $f$ is a homeomorphism. There exists a downwards closed subtree
$R$ of $\ct$ such that $[R]=F$. Observe that $R$ is a perfect subtree; that is,
every $t\in R$ has two incomparable extensions in $R$. Hence, it is possible
to select a dyadic subtree $D=\{s_t:t\in\ct\}$ of $\ct$ such that $D\subseteq R$
and with the following properties.
\begin{enumerate}
\item[(a)] For every $t\in\ct$ the node $s_t$ ends with $1$; that is, there exists
$w_t\in\ct$ such that $s_t=w_t^{\con}1$.
\item[(c)] For every $t,t'\in\ct$ with $t\neq t'$ we have $|s_t|\neq |s_{t'}|$.
\end{enumerate}
We set $k_t=|s_t|-1$ for every $t\in\ct$ and we define $h:2^\nn\to P$ by the rule
\[ h(\tau)= f\Big( \bigcup_{n\in\nn} s_{\tau|n} \Big).\]
The sequence $(k_t)_{t\in\ct}$ and the map $h$ are as desired.
\end{proof}
Let $(k_t)_{t\in\ct}$ be the sequence in $\nn$ obtained by Lemma \ref{l323}.
For every $t\in\ct$ we define
\begin{equation} \label{e312}
e_t=d_{k_t}
\end{equation}
The desired sequence $(x_t)_{t\in\ct}$ will be a subsequence of $(e_t)_{t\in\ct}$
of the form $(e_{s_t})_{t\in \ct}$ where $\{s_t:t\in\ct\}$ is dyadic subtree
of $\ct$. We isolate, for future use, the following immediate consequence
of Lemma \ref{l323}.
\begin{enumerate}
\item[(P4)] For every $\tau\in 2^\nn$ the sequences $(e_{\tau|n})$ and
$(T(e_{\tau|n}))$ are weak* convergent to the non-zero vectors
$x^{**}_{h(\tau)}$ and $y^{**}_{h(\tau)}$ respectively.
\end{enumerate}
\begin{lem} \label{l324}
There exist a regular dyadic subtree $D_0$ of $\ct$ and a constant $\theta>0$
such that $\|T(e_t)\|\geq \theta$ for every $t\in D_0$.
\end{lem}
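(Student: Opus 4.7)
The plan is to apply the Baire Category Theorem to $2^\nn$, in the same spirit as the proof of Lemma \ref{l313}. For every $m,k\in\nn$ define
$$F_{m,k}=\bigl\{\tau\in 2^\nn : \|T(e_{\tau|n})\|\geq 2^{-m}\ \text{for every}\ n\geq k\bigr\}.$$
Since the vector $e_{\tau|n}$ depends only on $\tau|n$, the set $\{\tau\in 2^\nn:\|T(e_{\tau|n})\|\geq 2^{-m}\}$ is a finite union of basic clopen sets of $2^\nn$, and so each $F_{m,k}$ is closed.

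The key observation is the covering identity $2^\nn=\bigcup_{m,k\in\nn} F_{m,k}$. Indeed, by property (P4), for every $\tau\in 2^\nn$ the sequence $(T(e_{\tau|n}))$ is weak* convergent to the non-zero vector $y^{**}_{h(\tau)}\in Y^{**}$. Since the norm on $Y^{**}$ is weak* lower semi-continuous and $\|y^{**}_{h(\tau)}\|>0$, for every such $\tau$ there exist $m,k\in\nn$ with $\|T(e_{\tau|n})\|\geq 2^{-m}$ whenever $n\geq k$, that is, $\tau\in F_{m,k}$. By the Baire Category Theorem applied to the Polish space $2^\nn$, some $F_{m_0,k_0}$ has non-empty interior. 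Hence there exists $t_0\in\ct$ with $|t_0|\geq k_0$ such that $\{\tau\in 2^\nn : t_0\sqsubset \tau\}\subseteq F_{m_0,k_0}$. For any $s\in\ct$ with $t_0\sqsubseteq s$, picking any $\tau\in 2^\nn$ extending $s$ and applying the definition of $F_{m_0,k_0}$ with $n=|s|\geq k_0$ yields $\|T(e_s)\|\geq 2^{-m_0}$.

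Finally, define a regular dyadic subtree $D_0=\{s_t:t\in\ct\}$ of $\ct$ by setting $s_\varnothing=t_0$ and $s_{u^\con i}=s_u{}^\con i$ for all $u\in\ct$ and $i\in\{0,1\}$. Every node of $D_0$ extends $t_0$ and has length $|t_0|+|t|\geq k_0$, so by the previous paragraph $\|T(e_t)\|\geq 2^{-m_0}$ for every $t\in D_0$. Setting $\theta=2^{-m_0}$ completes the argument. The only substantive ingredient is the covering of $2^\nn$ by the closed sets $F_{m,k}$, which rests entirely on the non-vanishing of the vectors $y^{**}_{h(\tau)}$ guaranteed by property (P4); the construction of the regular dyadic subtree is then routine.
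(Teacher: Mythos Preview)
Your proof is correct and follows essentially the same approach as the paper's: both use property (P4) and the weak* lower semicontinuity of the norm to see that along every branch the norms $\|T(e_{\tau|n})\|$ are eventually bounded below, and then extract a cone $\{t:t_0\sqsubseteq t\}$ on which a uniform lower bound holds. The only difference is cosmetic---the paper carries out the Baire argument by hand (assuming no such cone exists and diagonalizing to build a single branch along which the norms tend to $0$), whereas you invoke the Baire Category Theorem explicitly.
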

\begin{proof}
We will show that there exist $s_0\in\ct$ and $\theta>0$ such that for every
$t\in \ct$ with $s_0\sqsubseteq t$ we have $\|T(e_t)\|\geq \theta$.
In such a case, the regular dyadic subtree $D_0=\{t\in\ct:s_0\sqsubseteq t\}$
and the constant $\theta$ satisfy the requirements of the lemma.

To find the node $s_0$ and the constant $\theta$ we will argue by contradiction.
So, assume that for every $s\in\ct$ and every $\theta>0$ there exists $t\in\ct$
with $s\sqsubseteq t$ and such that $\|T(e_t)\|\leq \theta$. Hence, it is possible
to select a sequence $(t_k)$ in $\ct$ such that for every $k\in\nn$ we have
\begin{enumerate}
\item[(a)] $t_k\sqsubset t_{k+1}$ and
\item[(b)] $\|T(e_{t_k})\|\leq 2^{-k}$.
\end{enumerate}
By (a) above, the set $\{t_k:k\in\nn\}$ is an infinite chain. We set
\[ \tau=\bigcup_{k\in\nn} t_k\in 2^\nn.\]
By property (P4), the sequence $(T(e_{\tau|n}))$ is weak* convergent
to the non-zero vector $y^{**}_{h(\tau)}$. Hence, so is the sequence
$(T(e_{t_k}))$. By (b) above, we see that $y^{**}_{h(\tau)}=0$,
a contradiction. The proof is completed.
\end{proof}
\begin{lem} \label{l325}
There exists a regular dyadic subtree $D_1$ of $\ct$ with $D_1\subseteq D_0$
and such that for every infinite antichain $A$ of $D_1$ the sequence
$(e_t)_{t\in A}$ is weakly null.
\end{lem}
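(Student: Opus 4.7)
The plan is to apply Theorem \ref{t211} separately to the increasing and the decreasing antichains of $D_0$, and to rule out the unwanted alternative via Lemma \ref{l321} combined with Theorem \ref{t212}. Combining Lemma \ref{l210}(ii) with the elementary fact that a bounded sequence in a Banach space is weakly null if and only if every subsequence of it admits a weakly null sub-subsequence will reduce the statement to producing a regular dyadic $D_1\subseteq D_0$ on which every increasing and every decreasing antichain has weakly null image under $e$. Using the stability part (iii) of Lemma \ref{l210}, these two requirements can be imposed successively, so it suffices to treat the increasing case.

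For the increasing case we introduce
\[
\ccc:=\{(s_n)\in D_0^\nn:(e_{s_n})_n\text{ is not weakly null in }X\}.
\]
Since $X$ is separable, the space $(B_{X^*},w^*)$ is a compact metric, hence Polish, space, and the equivalence
\[
(s_n)\in\ccc\iff\exists x^*\in B_{X^*}\ \exists m\in\nn:\ \{n\in\nn:|x^*(e_{s_n})|\ge 2^{-m}\}\text{ is infinite}
\]
exhibits $\ccc$ as the projection onto $D_0^\nn$ of a Borel subset of $B_{X^*}\times\nn\times D_0^\nn$, hence as analytic. Theorem \ref{t211} will then provide a regular dyadic $R\subseteq D_0$ satisfying either $\incr(R)\subseteq\ccc$ or $\incr(R)\cap\ccc=\varnothing$; the latter alternative is the desired outcome.

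The hard part is to rule out $\incr(R)\subseteq\ccc$, i.e., to exhibit a weakly null increasing antichain of $R$. Let $\{s^R_t:t\in\ct\}$ denote the canonical representation of $R$. For every $\sigma\in 2^\nn$, property (P4) guarantees that the chain $(e_{s^R_{\sigma|n}})_n$ is weak* convergent to the non-zero element $x^{**}_{h(\tau_\sigma)}\in X^{**}$, where $\tau_\sigma:=\bigcup_n s^R_{\sigma|n}\in 2^\nn$. Since $\sigma\mapsto h(\tau_\sigma)$ is a continuous injection from $2^\nn$ into $P$, its image is uncountable, and Lemma \ref{l321} then yields pairwise distinct branches $(\sigma_j)_j\subseteq 2^\nn$ with $x^{**}_{h(\tau_{\sigma_j})}\to 0$ in the weak* topology. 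After a pigeonhole refinement we may assume that $(\sigma_j)_j$ converges in $2^\nn$ to some $\sigma_\infty$, is lexicographically strictly increasing, and that the positions $m_j$ of first disagreement between $\sigma_j$ and $\sigma_\infty$ form a strictly increasing sequence. Theorem \ref{t212} applied to the weak* compact Baire-1 set $\kkk:=\overline{\{e_t:t\in D_0\}}^{w^*}\cup\{0\}\subseteq X^{**}$, with $x^j_k:=e_{s^R_{\sigma_j|k}}$ weak* convergent to $x^{**}_{h(\tau_{\sigma_j})}$ and the latter weak* convergent to $0$, will produce indices $(j_i,k_i)_i$ with $j_i<k_i<j_{i+1}$ for which $(e_{s^R_{\sigma_{j_i}|k_i}})_i$ is weak* null, that is, weakly null in $X$.

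The principal technical obstacle will be to pass to a further sub-subsequence enforcing the depth conditions $m_{j_i}<k_i\le m_{j_{i+1}}$ of Definition \ref{d29}; once this is done, the nodes $(\sigma_{j_i}|k_i)_i$ form an increasing antichain of $\ct$, and hence, by Lemma \ref{l210}(iii) and the canonical form of $R$, of $R$ itself, contradicting $\incr(R)\subseteq\ccc$. Reconciling the only guarantee $j_i<k_i<j_{i+1}$ furnished by Theorem \ref{t212} with the growth requirements on the positions $m_{j_i}$ imposed by Definition \ref{d29} will require choosing the branches $(\sigma_j)$ so that the growth of $m_j$ is appropriately calibrated against $j$.
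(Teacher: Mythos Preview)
Your approach diverges from the paper's, and there is a genuine gap. The problematic step is the assertion that after a pigeonhole refinement $(\sigma_j)_j$ may be taken lexicographically increasing. Pigeonhole only yields lex \emph{monotone}: once the splitting levels $m_j$ are strictly increasing, each $\sigma_j$ lies to the left or right of $\sigma_\infty$ according to the bit $\sigma_\infty(m_j)$, and a further pass gives either all-left or all-right. You have no control over which; Lemma \ref{l321} hands you a sequence, not one converging from a prescribed side. If the outcome is all-right, your diagonal construction (even granting a successful calibration) produces a \emph{decreasing} antichain of $R$ with weakly null image, which says nothing about $\incr(R)$ and therefore does not rule out $\incr(R)\subseteq\ccc$. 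Hence the increasing and decreasing cases cannot be handled independently and symmetrically as you propose; the argument aimed at one may well certify the other instead.

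The paper confronts exactly this asymmetry. It applies Theorem \ref{t211} for both $\incr$ and $\decr$ at the outset (obtaining $D_1$), and then proceeds in two stages. First (Claim \ref{c326}) it shows only that \emph{at least one} of $\incr(D_1)$, $\decr(D_1)$ lands in the good color: Lemma \ref{l321} forces $0$ into the weak* closure of $\{e_t:t\in D_1\}$, so some sequence in $D_1$ is weakly null, and Ramsey together with Lemma \ref{l210}(ii) refines it to an increasing or a decreasing antichain---whichever one it happens to be. Second (Claim \ref{c327}) it bootstraps from one case to the other: assuming every increasing antichain of $D_1$ is weakly null, it builds a nested family of increasing antichains $(t^n_k)_k$ with $t^{n'}_k\prec t^n_l$ whenever $n<n'$, applies Theorem \ref{t212} to these \emph{already} weakly null sequences, and notes that the diagonal automatically satisfies the $\prec$-reversal (3D); a final pass through Lemma \ref{l210}(ii) then delivers a decreasing antichain with weakly null image. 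This bootstrap is the idea you are missing, and it also dissolves your calibration difficulty, since the $\prec$-ordering of the diagonal is engineered into the nested construction rather than recovered a posteriori from the index inequalities $j_i<k_i<j_{i+1}$ of Theorem \ref{t212}.
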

\begin{proof}
Consider the subset $\ccc$ of $D_0^\nn$ defined by
\[ (s_n)\in\ccc\Leftrightarrow \text{the sequence }
(e_{s_n}) \text{ is weakly null}.\]
It is easy to check that $\ccc$ is a co-analytic subset of $D_0^\nn$.
Applying Theorem \ref{t211} for the increasing antichains of $D_0$ and
the color $\ccc$, we find a regular dyadic subtree $R$ of $\ct$ with
$R\subseteq D_0$ and such that either $\incr(R)\subseteq\ccc$ or
$\incr(R)\cap\ccc=\varnothing$. Next, applying Theorem \ref{t211}
for the decreasing antichains of $R$ and the same color, we find
a regular dyadic subtree $D_1$ of $\ct$ with $D_1\subseteq R$ and
such that either $\decr(D_1)\subseteq\ccc$ or $\decr(D_1)\cap\ccc=\varnothing$.
We will show that the regular dyadic subtree $D_1$ is the desired one.
Indeed, notice that $D_1\subseteq D_0$. To show that for every infinite
antichain $A$ of $D_1$ the sequence $(e_t)_{t\in A}$ is weakly null,
we will show first the following weaker property.
\begin{claim} \label{c326}
Either $\incr(D_1)\subseteq\ccc$ or $\decr(D_1)\subseteq\ccc$.
\end{claim}
\begin{proof}[Proof of Claim \ref{c326}]
Let $\kkk$ be the weak* closure of the set $\{e_t:t\in D_1\}$ in $X^{**}$.
By property (P4), we have that $x^{**}_{h(\tau)}\in\kkk$ for every
$\tau\in [\hat{D_1}]$. The map $h$ is one-to-one. Therefore, the set
$U=\{h(\tau):\tau\in [\hat{D_1}]\}$ is uncountable. By Lemma \ref{l321},
there exists a sequence $(\tau_n)$ in $[\hat{D_1}]$ such that the
sequence $(x^{**}_{h(\tau_n)})$ is weak* convergent to $0$. Hence,
$0\in\kkk$. It is then possible to select an infinite subset $S$
of $D_1$ such that the sequence $(e_t)_{t\in S}$ is weakly null.
Applying the classical Ramsey Theorem, we find an infinite subset $S'$
of $S$ which is either a chain or an antichain. Notice that $S'$
has to be an antichain (for is not, there would existed $\tau\in [\hat{D_1}]$
such that $x^{**}_{h(\tau)}=0$). By part (ii) of Lemma \ref{l210},
there exists a sequence $(s_n)$ in $S'$ such that either $(s_n)\in \incr(D_1)$
or $(s_n)\in\decr(D_1)$. If $(s_n)\in\incr(D_1)$, then, by part (iii) of Lemma
\ref{l210}, we see that $\incr(R)\cap\ccc\neq\varnothing$ and so
$\incr(D_1)\subseteq\incr(R)\subseteq\ccc$. Otherwise,
$\decr(D_1)\cap\ccc\neq\varnothing$ which yields that $\decr(D_1)\subseteq\ccc$.
The proof of Claim \ref{c326} is completed.
\end{proof}
Next we strengthen the conclusion of Claim \ref{c326} as follows.
\begin{claim} \label{c327}
We have $\incr(D_1)\subseteq\ccc$ and $\decr(D_1)\subseteq\ccc$.
\end{claim}
\begin{proof}[Proof of Claim \ref{c327}]
By Claim \ref{c326}, either $\incr(D_1)\subseteq\ccc$ or
$\decr(D_1)\subseteq\ccc$. As the argument is symmetric, we will assume that
$\incr(D_1)\subseteq\ccc$. Recursively, for every $n\in\nn$ we select
an infinite antichain $(t^n_k)$ of $D_1$ such that the following are satisfied.
\begin{enumerate}
\item[(a)] For every $n\in\nn$ we have $(t^n_k)\in\incr(D_1)$.
\item[(b)] For every $n,n'\in\nn$ with $n<n'$ and every $k,l\in\nn$
we have $t^{n'}_k\prec t^n_l$.
\end{enumerate}
The recursive selection is fairly standard and the details are left to the reader.
By (a) above and our assumption that $\incr(D_1)\subseteq\ccc$, we see that
for every $n\in\nn$ the sequence $(e_{t^n_k})$ is weakly null. By Theorem
\ref{t212}, there exists a sequence $(n_i,k_i)$ in $\nn\times\nn$
with $n_i<k_i<n_{i+1}$ for every $i\in\nn$ and such that the sequence
$(e_{t^{n_i}_{k_i}})$ is also weakly null. By (b), we see that
\begin{enumerate}
\item[(c)] $t^{n_{i'}}_{k_{i'}}\prec t^{n_i}_{k_i}$ for every $i,i'\in\nn$
with $i<i'$.
\end{enumerate}
By part (ii) of Lemma \ref{l210}, there exists a subsequence of
$(t^{n_i}_{k_i})$, denoted for simplicity by $(s_m)$, such that
either $(s_m)\in\incr(D_1)$ or $(s_m)\in\decr(D_1)$. Invoking (c),
we get that $(s_m)\in\decr(D_1)$. Since the sequence $(e_{s_m})$
is weakly null, we conclude that $\decr(D_1)\cap\ccc\neq\varnothing$
and so $\decr(D_1)\subseteq\ccc$. The proof of Claim \ref{c327}
is completed.
\end{proof}
We are now ready to check that the sequence $(e_t)_{t\in A}$ is
weakly null for every infinite antichain $A$ of $D_1$. So let $A$
be one. Let $B$ be an arbitrary infinite subset of $A$. By part
(ii) of Lemma \ref{l210}, there exists an infinite sequence
$(s_n)$ in $B$ such that either $(s_n)\in\incr(D_1)$ or
$(s_n)\in\decr(D_1)$. By Claim \ref{c327}, we see that the sequence
$(e_{s_n})$ is weakly null. In other words, every subsequence of
$(e_t)_{t\in A}$ has a further weakly null subsequence. This
yields that the entire sequence $(e_t)_{t\in A}$ is weakly
null. Thus, the proof of Lemma \ref{l325} is completed.
\end{proof}
As we have already mentioned in the introduction, by $\varphi:\ct\to\nn$
we denote the unique bijection satisfying $\varphi(t)<\varphi(t')$
if either $|t|<|t'|$ or $|t|=|t'|$ and $t<_{\mathrm{lex}}t'$.
\begin{lem} \label{l328}
There exists a dyadic subtree
$D_2=\{s_t:t\in\ct\}$ of $\ct$ such that $D_2\subseteq D_1$
and with the following property. If $(t_n)$ is the enumeration
of $\ct$ according to the bijection $\varphi$, then the sequences
$(e_{s_{t_n}})$ and $(T(e_{s_{t_n}}))$ are both seminormalized
basic sequences.
\end{lem}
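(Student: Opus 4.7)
My plan is to construct $D_2$ by recursion on the $\varphi$-enumeration $\ct=(t_n)$, choosing each new node deep enough in $D_1$ that the pair $(e_{s_{t_n}}, T(e_{s_{t_n}}))$ satisfies a Mazur-type almost-orthogonality condition in $X\oplus Y$ relative to the previously chosen vectors.

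As a preparatory step, I would first shrink $D_1$ to a cone on which $\|e_t\|$ is uniformly bounded below; otherwise the weakly null sequences produced by Lemma \ref{l325} need not be seminormalized and Mazur's lemma does not apply. To produce such a cone I mimic the proof of Lemma \ref{l324}: if no cone with $\|e_t\|\ge\delta$ existed, one could build a chain $(t_k)$ in $D_1$ with $t_k\sqsubset t_{k+1}$ and $\|e_{t_k}\|\le 2^{-k}$, and setting $\tau=\bigcup_k t_k\in 2^\nn$, property (P4) would force $(e_{\tau|n})$ to be weak* convergent to the non-zero vector $x^{**}_{h(\tau)}$, yet the subsequence $(e_{t_k})$ converges to $0$ in norm, a contradiction. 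After replacing $D_1$ by this cone I assume $\|e_t\|\ge\delta>0$ for every $t\in D_1$; combined with $\|T(e_t)\|\ge\theta$ from Lemma \ref{l324}, every infinite antichain in $D_1$ now yields, via Lemma \ref{l325} and the weak continuity of $T$, a weakly null and seminormalized sequence in $X\oplus Y$.

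Next I run the recursion. Fix $\epsilon_n>0$ with $K:=\prod_n(1+\epsilon_n)<\infty$. Set $s_{t_0}$ to be any node of $D_1$. At stage $n\ge 1$, let $t^-$ be the parent of $t_n$ in $\ct$ (already treated, since $\varphi(t^-)<n$) and let $\epsilon\in\{0,1\}$ be the last bit of $t_n$. I would select $s_{t_n}$ from the cone $C_n:=\{s\in D_1 : (s_{t^-})^{\con}\epsilon\sqsubseteq s\}$, which is infinite and whose use guarantees that the family $\{s_t\}$ is the canonical representation of a dyadic subtree of $D_1$. Choosing any infinite antichain $A\subseteq C_n$ yields a weakly null, seminormalized sequence in $X\oplus Y$, and the classical Mazur lemma then provides $s\in A$ such that
\[
\|y\|\le (1+\epsilon_n)\|y+\alpha\, e_s\| \ \text{ and }\ \|z\|\le(1+\epsilon_n)\|z+\alpha\, T(e_s)\|
\]
for every $y$ in the span of $\{e_{s_{t_k}}:k<n\}$, every $z$ in the span of $\{T(e_{s_{t_k}}):k<n\}$ and every $\alpha\in\rr$; I set $s_{t_n}:=s$.

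Iterating these inequalities shows that both $(e_{s_{t_n}})_n$ and $(T(e_{s_{t_n}}))_n$ are basic with basis constant at most $K$, while the bounds $\delta\le\|e_{s_{t_n}}\|\le 1$ and $\theta\le\|T(e_{s_{t_n}})\|\le\|T\|$ make them seminormalized. The main obstacle is the preliminary shrinking step; once $\|e_t\|$ is uniformly bounded below on $D_1$, everything that follows is a routine simultaneous Mazur construction driven by the weak-null property of antichains from Lemma \ref{l325}.
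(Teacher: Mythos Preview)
Your proof is correct and follows essentially the same route as the paper: a Mazur-type recursive selection along antichains of $D_1$, using Lemma \ref{l325} to supply the weakly null sequences needed at each stage. The one difference is that your preparatory shrinking of $D_1$ to force $\|e_t\|\ge\delta$ is unnecessary: since $D_1\subseteq D_0$, Lemma \ref{l324} already gives $\|T(e_t)\|\ge\theta$, and hence $\|e_t\|\ge \|T(e_t)\|/\|T\|\ge \theta\cdot\|T\|^{-1}$ for every $t\in D_1$, so both families are seminormalized from the outset and you can proceed directly to the Mazur construction.
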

\begin{proof}
Notice, first, that the sequences $(e_t)_{t\in D_1}$ and $(T(e_t))_{t\in D_1}$
are seminormalized. Indeed, $D_1\subseteq D_0$ and so, by Lemma \ref{l324},
for every $t\in D_1$ we have
\[ \theta\leq \|T(e_t)\|\leq \|T\| \ \text{ and } \ \theta\cdot \|T\|^{-1}
\leq \|e_t\|\leq 1.\]
Let $t\in D_1$ be arbitrary. We select an infinite antichain $A$ of $D_1$
such that $t\sqsubset s$ for every $s\in A$. By Lemma \ref{l325}, the
sequences $(x_s)_{s\in A}$ and $(T(x_s))_{s\in A}$ are both weakly null.
Using this observation and the classical procedure of Mazur for selecting
basic sequences (see \cite{LT}), the result follows.
\end{proof}
Let $D_2=\{s_t:t\in\ct\}$ be the dyadic subtree obtained by Lemma
\ref{l328}. For every $t\in\ct$ we define
\begin{equation} \label{e313}
x_t=e_{s_t}
\end{equation}
We will show that the sequence $(x_t)_{t\in\ct}$ is the desired one.
\medskip

\noindent \textbf{(1)} Let $(t_n)$ be the enumeration of $\ct$ according
to the bijection $\varphi$. By Lemma \ref{l328}, we have that $(x_{t_n})$
and $(T(x_{t_n}))$ are seminormalized basic sequences.
\medskip

\noindent \textbf{(2)} Let $A$ be an infinite antichain of $\ct$. Notice
that the set $A'=\{s_t:t\in A\}$ is an infinite antichain of $D_2$. Since
$D_2\subseteq D_1$, by Lemma \ref{l325}, we see that the sequences
$(x_t)_{t\in A}$ and $(T(x_t))_{t\in A}$ are both weakly null.
\medskip

\noindent \textbf{(3)} Let $\sg\in 2^\nn$ be arbitrary. Observe that
the set $\{s_{\sg|n}:n\in\nn\}$ is an infinite chain of $\ct$. We define
\[ \tau_\sg=\bigcup_{n\in\nn} s_{\sg|n}\in 2^\nn\]
and we notice that $(x_{\sg|n})$ is a subsequence of $(e_{\tau_\sg|n})$.
By property (P4), we see that the sequences $(x_{\sg|n})$ and
$(T(x_{\sg|n}))$ are weak* convergent to the non-zero vectors
$x^{**}_{h(\tau_{\sg})}$ and $y^{**}_{h(\tau_{\sg})}$ respectively.

Next we check that $x^{**}_{h(\tau_{\sg})}\in X^{**}\setminus X$.
Assume on the contrary that $x^{**}_{h(\tau_{\sg})}\in X$. Let
$(t_n)$ be the enumeration on $\ct$ according to the bijection
$\varphi$ and observe that $(x_{\sg|n})$ is a subsequence of
$(x_{t_n})$. By Lemma \ref{l328}, we get that $(x_{\sg|n})$ is
a basic sequence which is weakly convergent to
$x^{**}_{h(\tau_{\sg})}\in X$. This implies that
$x^{**}_{h(\tau_{\sg})}=0$, a contradiction.
Therefore, $x^{**}_{h(\tau_{\sg})}\in X^{**}\setminus X$.
With the same reasoning we verify that
$y^{**}_{h(\tau_{\sg})}\in Y^{**}\setminus Y$.

Finally, let $\sg,\sg'\in 2^\nn$ with $\sg\neq\sg'$. Notice
that $\tau_{\sg}\neq\tau_{\sg'}$. The map $h$ obtained by
Lemma \ref{l323} is one-to-one. Therefore, $h(\tau_\sg)\neq
h(\tau_{\sg'})$. By Lemma \ref{l322}, we conclude that
$x^{**}_{h(\tau_{\sg})}\neq x^{**}_{h(\tau_{\sg'})}$
and $y^{**}_{h(\tau_{\sg})}\neq y^{**}_{h(\tau_{\sg'})}$.
\medskip

Having verified that the sequences $(x_t)_{t\in\ct}$ and
$(T(x_t))_{t\in\ct}$ are both topologically equivalent
to the basis of James tree, the proof of Theorem \ref{t13} is completed.


\section{Proof of Theorem \ref{t14}}

This section is devoted to the proof of Theorem \ref{t14}. Let us
first argue that (2) implies (1). If the operator $T$ fixes a copy
of $\ell_1$, then clearly $T^*$ has non-separable range. If, alternatively,
there exists a bounded sequence $(x_t)_{t\in\ct}$ in $X$ such that
its image $(T(x_t))_{t\in\ct}$ is topologically equivalent
to the basis of James tree, then the non-separability of the
range of $T^*$ is guaranteed by Lemma \ref{l313}.

We work now to prove that (1) implies (2). As in the proof of
Theorem \ref{t13}, we fix a dense sequence $(d_n)$ in $B_X$
and we set $r_n=T(d_n)$ for every $n\in\nn$. We distinguish
the following cases.
\medskip

\noindent \textsc{Case 1:} \textit{There exists a subsequence
$(r_{l_n})$ of $(r_n)$ which is equivalent to the standard unit
vector basis of $\ell_1$.} Let $E$ be the closed subspace of $X$
spanned by the corresponding subsequence $(d_{l_n})$ of $(d_n)$.
Notice that $E$ is isomorphic to $\ell_1$ and that $T:E\to Y$
is an isomorphic embedding. Hence, in this case we see that
the operator $T$ fixes a copy of $\ell_1$.
\medskip

\noindent \textsc{Case 2:} \textit{No subsequence of $(r_n)$ is
equivalent to the standard unit vector basis of $\ell_1$.} We will
show that there exists a bounded sequence $(x_t)_{t\in\ct}$ in $X$
such that its image $(T(x_t))_{t\in\ct}$ is topologically equivalent
to the basis of James tree. The proof is similar to the proof of
Theorem \ref{t13}, and so, we shall only indicate the necessary
changes.

First we observe that, by Rosenthal's Dichotomy \cite{Ro2} and
our assumption, every subsequence of $(r_n)$ has a further weak
Cauchy subsequence. Therefore, by the Main Theorem in \cite{Ro3},
the weak* closure $\hhh$ of the set $\{r_n:n\in\nn\}$ in $Y^{**}$
consists only of Baire-1 functions. We define the families
$\rrr$ and $\rrr_0$ exactly as we did in (\ref{e33}) and
(\ref{e35}) respectively. We recall that both $\rrr$ and
$\rrr_0$ are hereditary and co-analytic. Moreover, $\rrr$
is cofinal in $[\nn]^{\infty}$ while $\rrr_0$ is an M-family.
Arguing as in the proof of Lemma \ref{l316}, we see that
there exists a hereditary, Borel and cofinal subfamily
$\fff'$ of $\rrr$. We define
\begin{equation} \label{e414}
\aaa'=\fff'\setminus \rrr_0.
\end{equation}
We have the following analogue of Lemma \ref{l318}.
\begin{lem} \label{l429}
There exists a perfect Lusin gap inside $(\aaa',\rrr_0)$.
\end{lem}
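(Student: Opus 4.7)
The strategy is to verify the hypotheses of Theorem \ref{t28} for the pair $(\aaa',\rrr_0)$ and rule out its first alternative. The family $\aaa' = \fff' \setminus \rrr_0$ is analytic (being the intersection of the Borel family $\fff'$ with the analytic complement of $\rrr_0$), while $\rrr_0$ is hereditary, co-analytic and an M-family by (the proof of) Lemma \ref{l317}. Theorem \ref{t28} thus reduces the matter to showing that $\aaa'$ is \emph{not} countably generated in $\rrr_0^\perp$.

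Suppose, towards a contradiction, that a sequence $(M_k)$ in $\rrr_0^\perp$ generates $\aaa'$. I would follow the scheme of Lemma \ref{l318}, but replace each set $\kkk_k \subseteq X^{**}$ there by the weak* closure $\hhh_k$ of $\{r_n : n \in M_k\}$ in $Y^{**}$. Because $\hhh_k \subseteq \hhh$ consists of Baire-1 functions (this is where Case 2 is essential), the Bourgain--Fremlin--Talagrand theorem forces $0 \notin \hhh_k$: otherwise a subsequence of $(r_n)_{n \in M_k}$ would be weak* null in $Y^{**}$, hence weakly null in $Y$, contradicting $M_k \in \rrr_0^\perp$. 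So for each $k$ there exist a finite set $F_k \subseteq Y^*$ and $\ee_k > 0$ with $\hhh_k \cap W(0,F_k,\ee_k) = \varnothing$.

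The remaining step is to show that these data force $T^*(Y^*)$ to be separable. Set $F = \bigcup_k F_k$, $Z_0 = \overline{\mathrm{span}}(F) \subseteq Y^*$, and $Z = \overline{T^*(Z_0)} \subseteq X^*$; both are separable. I claim $T^*(Y^*) \subseteq Z$, which would contradict the non-separability of $T^*(Y^*)$. Otherwise, Hahn--Banach furnishes $y^* \in Y^*$, $x^{**} \in X^{**}$ and $\delta > 0$ with $Z \subseteq \ker(x^{**})$ and $x^{**}(T^*(y^*)) > \delta$. Put $y^{**} = T^{**}(x^{**}) \in Y^{**}$. Since $T^*(Z_0) \subseteq Z \subseteq \ker(x^{**})$, the functional $y^{**}$ annihilates $Z_0$, while $y^{**}(y^*) = x^{**}(T^*(y^*)) > \delta$. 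Because $(d_n)$ is weak* dense in $B_{X^{**}}$ (Goldstine) and $T^{**}$ is weak*-continuous, $y^{**}$ lies in the weak* closure of $\{r_n\}$, namely in $\hhh$; a second application of Bourgain--Fremlin--Talagrand yields $L \in [\nn]^\infty$ with $(r_n)_{n \in L}$ weak* convergent to $y^{**}$. The inequality $y^{**}(y^*) > \delta$ then gives $[L]^\infty \cap \rrr_0 = \varnothing$. Cofinality of $\fff'$ supplies $A \in [L]^\infty$ with $[A]^\infty \subseteq \aaa'$, hence $A \subseteq^* M_{k_0}$ for some $k_0$, and a pigeonhole argument as in the last paragraph of Lemma \ref{l318} produces $y_0^* \in F_{k_0}$ and $M \in [L]^\infty$ with $M \subseteq M_{k_0}$ and $|y_0^*(r_n)| \geq \ee_{k_0}$ for all $n \in M$. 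Taking the weak* limit of $(r_n)_{n \in M}$ gives $|y^{**}(y_0^*)| \geq \ee_{k_0}$; but $y_0^* \in F \subseteq Z_0$ and $y^{**}$ vanishes on $Z_0$, a contradiction.

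The only substantive difference from Lemma \ref{l318}, and the sole point requiring care, is that $B_{X^{**}}$ may no longer consist of Baire-1 functions, so a subsequence of $(d_n)$ weak* convergent to $x^{**}$ cannot be chosen directly. The fix is to push $x^{**}$ forward to $y^{**} = T^{**}(x^{**})$, which automatically lands in the Baire-1 weak* compactum $\hhh$, where Bourgain--Fremlin--Talagrand applies.
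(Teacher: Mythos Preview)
Your proof is correct and follows essentially the same route as the paper: verify the hypotheses of Theorem~\ref{t28}, assume $\aaa'$ is countably generated in $\rrr_0^\perp$, pass to weak* closures $\hhh_k\subseteq Y^{**}$ (rather than in $X^{**}$), exclude $0$ from each $\hhh_k$, and derive the separability of $T^*(Y^*)$ via Hahn--Banach and the Baire-1 structure of $\hhh$. Your closing remark pinpointing why one must push $x^{**}$ forward to $y^{**}=T^{**}(x^{**})\in\hhh$ is exactly the point the paper exploits when it writes ``since $T^{**}(B_{X^{**}})\subseteq\hhh$''; just make explicit that $x^{**}$ may be taken in $B_{X^{**}}$ so that this inclusion applies.
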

Granting Lemma \ref{l429}, the rest of the proof of Theorem
\ref{t14} is the same to that of Theorem \ref{t13} mutatis mutandis.

So, what remains is to prove Lemma \ref{l429}. By Theorem \ref{t28},
it is enough to show that the family $\aaa'$ is not countably generated
in the family $\rrr_0^\perp$. If this is not the case, then
there exists a sequence $(N_k)$ in $\rrr_0^\perp$ such that
for every $L\in \aaa'$ there exists $k\in\nn$ with
$L\subseteq^* N_k$. For every $k\in\nn$ let $\hhh_k$
be the weak* closure of the set $\{r_n:n\in N_k\}$
in $Y^{**}$. The fact that $N_k\in \rrr_0^\perp$
reduces to the fact that $0\notin \hhh_k$. Therefore,
there exist $E_k\subseteq Y^*$ finite and $\ee_k>0$ such
that $\hhh_k\cap W(0,E_k,\ee_k)=\varnothing$. Let $E$
be the norm closure of the linear span of the set
\[ E=\bigcup_k E_k. \]
The proof will be completed once we show that $T^*(E)$ is
norm dense in $T^*(Y^*)$. To this end, we will argue
by contradiction. So, assume that there exist $x^{**}\in X^{**}$,
$y^*\in Y^*$ and $\delta>0$ such that
\begin{enumerate}
\item[(a)] $\|x^{**}\|=\|T^*(y^*)\|=1$,
\item[(b)] $T^*(E)\subseteq \mathrm{Ker}(x^{**})$ and
\item[(c)] $x^{**}\big(T^*(y^*)\big)>\delta$.
\end{enumerate}
Since $T^{**}(B_{X^{**}})\subseteq \hhh$ and $\hhh$ consists only
of Baire-1 functions we may select $L\in\rrr$ such that the
sequence $(r_n)_{n\in L}$ is weak* convergent to $T^{**}(x^{**})$.
By (c) above, we may assume that $y^*(r_n)>\delta$ for every
$n\in L$, and so, $[L]^{\infty}\cap \rrr_0=\varnothing$.
Since the family $\fff'$ is cofinal in $[\nn]^{\infty}$ and the
sequence $(N_k)$ generates $\aaa'$, it is possible to select
$k_0\in \nn$, $y_0^*\in E_{k_0}$ and $A\in [L]^{\infty}$ such
that $|y^*_0(r_n)|\geq \ee_{k_0}$ for every $n\in A$. This
implies that $T^*(y^*_0)\notin \mathrm{Ker}(x^{**})$ which
contradicts property (b) above. Having arrived to the desired
contradiction the proof of Lemma \ref{l429} is completed,
and as we have already indicated, the proof of Theorem
\ref{t14} is also completed.


\section{Comments}

\subsection{ }

Theorem \ref{t13} and Theorem \ref{t14} were motivated by the
structural results in \cite{ADK1,ADK2} and our recent work on
quotients of separable Banach spaces in \cite{D2} where a special
case of Theorem \ref{t13} was proved and applied. Results of this
type are, typically, used to reduce the existence of an
\textit{uncountable} family to the existence of a canonical
\textit{countable} object which is much more amenable to
combinatorial manipulations.

\subsection{ }

We have already mentioned in the introduction that if an operator
$T:X\to Y$ fixes a copy of a sequence $(x_t)_{t\in\ct}$ topologically
equivalent to the basis of James tree, then the topological
structure of the weak* closure of $\{x_t:t\in\ct\}$ in $X^{**}$
is preserved under the action on $T^{**}$. Precisely, we have the following.
\begin{lem} \label{l530}
Let $X$ and $Y$ be Banach spaces and $T:X\to Y$ be an operator.
Assume that there exists a sequence $(x_t)_{t\in\ct}$ in $X$ such that
both $(x_t)_{t\in\ct}$ and $(T(x_t))_{t\in\ct}$ are topologically
equivalent to the basis of James tree. Let $\kkk$ and $\hhh$ be the
weak* closures of $\{x_t:t\in\ct\}$ and $\{T(x_t):t\in\ct\}$
respectively. Then $T^{**}:\kkk\to\hhh$ is a weak* homeomorphism.
\end{lem}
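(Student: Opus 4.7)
The strategy would be to exhibit $T^{**}|_\kkk\colon\kkk\to\hhh$ as a continuous bijection between weak*-compact Hausdorff spaces, since such a map is automatically a weak*-homeomorphism. Weak*-continuity of $T^{**}$ on $X^{**}$ is the standard property of an adjoint operator. Both $\kkk$ and $\hhh$ are weak*-closed subsets of bounded sets in $X^{**}$ and $Y^{**}$ (the defining basic sequences $(x_{t_n})$ and $(T(x_{t_n}))$ are seminormalized), hence weak*-compact by Banach--Alaoglu. Surjectivity $T^{**}(\kkk)=\hhh$ is routine: by continuity $T^{**}(\kkk)\subseteq\overline{\{T(x_t):t\in\ct\}}^{w^*}=\hhh$, while $T^{**}(\kkk)$, being a weak*-compact subset of $Y^{**}$ containing every $T(x_t)$, must contain $\hhh$.

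The core of the argument is injectivity, which I would obtain by establishing the explicit identification
\[
\kkk \;=\; \{0\}\,\cup\,\{x_t:t\in\ct\}\,\cup\,\{x^{**}_\sg:\sg\in 2^\nn\},
\]
together with the analogous description $\hhh=\{0\}\cup\{T(x_t):t\in\ct\}\cup\{y^{**}_\sg:\sg\in 2^\nn\}$, where $y^{**}_\sg:=T^{**}(x^{**}_\sg)$. Granting these descriptions, $T^{**}$ sends the three pieces of $\kkk$ bijectively onto the corresponding pieces of $\hhh$: the $T(x_t)$ are pairwise distinct (being a basic sequence), the $y^{**}_\sg$ are pairwise distinct by Definition \ref{d11}(3) applied to $(T(x_t))_{t\in\ct}$, and the three pieces of $\hhh$ are pairwise disjoint since $y^{**}_\sg\in Y^{**}\setminus Y$ whereas $T(x_t)\in Y$, and both $T(x_t)$ and $y^{**}_\sg$ are nonzero.

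To establish the identification of $\kkk$, I would take $u^{**}\in\kkk$ together with a net $(x_{t_\alpha})$ weak*-converging to $u^{**}$, and refine the net along an ultrafilter so that for every subset of $\ct$ the indices $(t_\alpha)$ are either eventually inside it or eventually outside it. A Cantor-tree trichotomy then forces one of three behaviours for $(t_\alpha)$: it is eventually constant equal to some $t_0\in\ct$, in which case $u^{**}=x_{t_0}$; it is eventually along a single branch $\sg\in 2^\nn$ with $|t_\alpha|\to\infty$, in which case $u^{**}=x^{**}_\sg$ by Definition \ref{d11}(3); or neither of the above, in which case one must force $u^{**}=0$. The third case is the principal obstacle, and I would handle it by contradiction: if $u^{**}\neq 0$, pick $x^*\in X^*$ with $|u^{**}(x^*)|>\delta>0$, so that $(t_\alpha)$ eventually lies in $S=\{t\in\ct:|x^*(x_t)|>\delta\}$; Definition \ref{d11}(2) forbids infinite antichains in $S$, and K\"onig's lemma applied to the downward closure of $S$ forces it to be contained in the union of finitely many branches of $2^\nn$ together with finitely many extra nodes. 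The refined net therefore ends up either eventually on one of those branches or eventually on the finite set of remaining nodes, contradicting the hypothesis of the third case. With the identification of $\kkk$ established, injectivity of $T^{**}|_\kkk$ is immediate, completing the proof.
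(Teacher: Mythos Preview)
Your argument is correct and follows a genuinely different route from the paper's. The paper first reduces to separable $X$ and $Y$, observes that $\kkk$ and $\hhh$ are then Rosenthal compacta (weak* compact sets of Baire-1 functions), identifies the isolated points of each as precisely $\{x_t\}$ and $\{T(x_t)\}$, shows that $(x_t)_{t\in S}$ is weak* convergent if and only if $(T(x_t))_{t\in S}$ is, and then invokes an external result (Lemma~19 of \cite{ADK1}) to extend $x_t\mapsto T(x_t)$ to a homeomorphism which is then identified with $T^{**}|_\kkk$. Your approach instead computes $\kkk$ and $\hhh$ explicitly as $\{0\}\cup\{x_t\}\cup\{x^{**}_\sg\}$ (respectively the $Y$-side) and checks bijectivity of $T^{**}$ piece by piece, using nothing beyond compactness to upgrade to a homeomorphism. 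The combinatorial core---that a subset of $\ct$ with no infinite antichain lies in finitely many branches modulo a finite set---is exactly what makes the third ultrafilter case collapse, and your K\"onig-type argument for it is sound (one shows the set of ``splitting'' nodes is finite, else an infinite antichain can be read off along a branch of its downward closure). Your route is more self-contained: it avoids the separability reduction, the Baire-1/Rosenthal machinery, and the dependence on \cite{ADK1}. The paper's route is shorter on the page because the heavy lifting is delegated to the cited lemma, and it fits naturally into the Rosenthal-compacta framework used elsewhere in the paper. One stylistic remark: rather than ``refining the net along an ultrafilter'', it is cleaner to note that every point of $\kkk$ is $\lim_{\mathcal U} x_t$ for some ultrafilter $\mathcal U$ on $\ct$, and then run the trichotomy (principal; non-principal containing a branch; non-principal containing no branch) directly on $\mathcal U$.
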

\begin{proof}
Clearly we may assume that $X$ and $Y$ are separable.
We observe that $\kkk$ and $\hhh$ consist only of Baire-1 functions.
\begin{claim} \label{c531}
The weak* isolated points of $\kkk$ is the set $\{x_t:t\in\ct\}$.
Respectively, the weak* isolated points of $\hhh$ is the set $\{T(x_t):t\in\ct\}$.
\end{claim}
\begin{proof}[Proof of Claim \ref{c531}]
We will argue only for the set $\kkk$ (the argument for the set $\hhh$ is
identical). Let $\iii$ be the set of all weak* isolated points of $\kkk$.
Let $x^{**}\notin\iii$ be arbitrary and select an infinite subset $A$ of
$\ct$ such that the sequence $(x_t)_{t\in A}$ is weak* convergent to $x^{**}$.
If $A$ contains an infinite antichain, then $x^{**}=0$. Otherwise, there
exists $\sg\in 2^\nn$ such that $x^{**}=x^{**}_\sg\in X^{**}\setminus X$.
It follows that $\iii\subseteq\{x_t:t\in\ct\}$. To see the other inclusion
assume, towards a contradiction, that there exists $s\in\ct$ such that
$x_s\notin\iii$. Let $(t_n)$ be the enumeration of $\ct$ according to the bijection
$\varphi$. There exists a subsequence $(x_k)$ of $(x_{t_n})$ which
is weakly convergent to $x_s$. Since $(x_k)$ is basic we get that
$x_s=0$, a contradiction. The proof of Claim \ref{c531} is completed.
\end{proof}
\begin{claim} \label{c532}
For every infinite subset $S$ of $\ct$ the sequence $(x_t)_{t\in S}$
is weak* convergent if and only if the sequence $(T(x_t))_{t\in S}$
is weak* convergent.
\end{claim}
\begin{proof}[Proof of Claim \ref{c532}]
Let $E$ be a Banach space and $(e_t)_{t\in\ct}$ be a
sequence in $E$ which is topologically equivalent to the basis
of James tree. Let $S$ be an arbitrary subset of $\ct$. By Definition
\ref{d11}, we see that the sequence $(e_t)_{t\in S}$ is weak*
convergent if and only if either
\begin{enumerate}
\item[(a)] there exists an infinite antichain $A$ of $\ct$
such that $S\subseteq^* A$ or
\item[(b)] there exists $\sg\in 2^\nn$ such that
$S\subseteq^* \{\sg|n:n\in\nn\}$.
\end{enumerate}
Using this observation, the result follows.
\end{proof}
By Claim \ref{c531}, Claim \ref{c532} and the fact that $\kkk$ and $\hhh$
consist only of Baire-functions, we may apply Lemma 19 in \cite{ADK1}
to infer that the map
\[ \kkk\ni x_t\mapsto T(x_t)\in\hhh\]
is extended to a weak* homeomorphism $\Phi:\kkk\to\hhh$. Using the
weak* continuity of $T^{**}$ we see that $T^{**}|_\kkk=\Phi$. The
proof of Lemma \ref{l530} is completed.
\end{proof}

\subsection{ }

Recall that a non-empty finite subset $\seg$ of $\ct$ is said to be
a \textit{segment} if there exist $s,t\in\ct$ with $s\sqsubseteq t$
and such that $\seg=\{w\in\ct: s\sqsubseteq w\sqsubseteq t\}$.
Let $1<p<+\infty$. The \textit{$p$-James tree space}, denoted
by $JT_p$, is defined to be the completion of $c_{00}(\ct)$
under the norm
\[ \|x\|_{JT_p}=\sup \Big\{ \Big( \sum_{i=0}^d \big|
\sum_{t\in \seg_i} x(t)\big|^p \Big)^{1/p} \Big\} \]
where the above supremum is taken over all families
$(\seg_i)_{i=0}^d$ of pairwise disjoint segments of $\ct$.
The classical James tree space is the space $JT_2$.

Let $(e^p_t)_{t\in\ct}$ be the standard Hamel basis of
$c_{00}(\ct)$ regarded as a sequence in $JT_p$. If $(t_n)$
is the enumeration of $\ct$ according to the bijection
$\varphi$, then the sequence $(e^p_{t_n})$ defines
a normalized Schauder basis of $JT_p$. It is easy to
check that $(e^p_t)_{t\in\ct}$ is topologically equivalent
to the basis of James tree according to Definition \ref{d11}
(a fact that actually justifies our terminology).

It is well-known that the space $JT_p$ is hereditarily $\ell_p$;
that is, every infinite-dimensional subspace of $JT_p$ contains a copy
of $\ell_p$ (see \cite{Ja}). In particular, if $1<p<q<+\infty$,
then every operator $T:JT_p\to JT_q$ is strictly singular.
Nevertheless, there do exist operators in $\mathcal{L}(JT_p,JT_q)$
fixing a copy of a sequence topologically equivalent to
the basis of James tree. The natural inclusion map
$I_{p,q}:JT_p\to JT_q$ is an example.



\begin{thebibliography}{99}

\bibitem{Al} D. E. Alspach, \textit{Quotients of $C[0,1]$ with separable
dual}, Israel J. Math., 29 (1978), 361-384.

\bibitem{ADK1} S. A. Argyros, P. Dodos and V. Kanellopoulos, \textit{A
classification of separable Rosenthal compacta and its applications},
Dissertationes Math., 449 (2008), 1-52.

\bibitem{ADK2} S. A. Argyros, P. Dodos and V. Kanellopoulos,
\textit{Unconditional families in Banach spaces}, Math. Annalen, 341 (2008), 15-38.

\bibitem{Bou1} J. Bourgain, \textit{The Szlenk index and operators on
$C(K)$ spaces}, Bull. Soc. Math. Belg. S\'{e}r. B, 31 (1979), 87-117.

\bibitem{Bou2} J. Bourgain, \textit{A result on operators on $C[0,1]$},
J. Operator Theory, 3 (1980), 275-289.

\bibitem{BFT} J. Bourgain, D. H. Fremlin and M. Talagrand, \textit{Pointwise
compact sets of Baire-measurable functions}, Amer. J. Math., 100 (1978), 845-886.

\bibitem{De1} G. Debs, \textit{Effective properties in compact sets of Borel
functions}, Mathematika, 34 (1987), 64-68.

\bibitem{De2} G. Debs, \textit{Borel extractions of converging sequences in compact
sets of Borel functions}, J. Math. Anal. Appl., 350 (2009), 731-744.

\bibitem{D1} P. Dodos, \textit{Codings of separable compact subsets of the first
Baire class}, Annals Pure Appl. Logic, 142 (2006), 425-441.

\bibitem{D2} P. Dodos, \textit{Quotients of Banach spaces and surjectively universal
spaces}, Studia Math., 197 (2010), 171-194.

\bibitem{DK} P. Dodos and V. Kanellopoulos, \textit{On pairs of definable orthogonal
families}, Illinois J. Math., 52 (2008), 181-201.

\bibitem{Fa} I. Farah, \textit{Luzin gaps}, Trans. Amer. Math. Soc., 356 (2004), 2197-2239.

\bibitem{FGJ} T. Figiel, N. Ghoussoub and W. B. Johnson, \textit{On the structure
of nonweakly compact operators on Banach lattices}, Math. Annalen, 257 (1981), 317-334.

\bibitem{FG} C. Finet and G. Godefroy, \textit{Biorthogonal systems and big quotient
spaces}, Contemp. Math., 85 (1989), 87-110.

\bibitem{Ga1} I. Gasparis, \textit{Operators on $C[0,1]$ preserving copies of asymptotic
$\ell_1$ spaces}, Math. Annalen, 333 (2005), 831-858.

\bibitem{Ga2} I. Gasparis, \textit{Operators on $C(K)$ spaces preserving copies of Schreier
spaces}, Trans. Amer. Math. Soc., 357 (2005), 1-30.

\bibitem{Ja} R. C. James, \textit{A separable somewhat reflexive Banach
space with non-separable dual}, Bull. Amer. Math. Soc., 80 (1974), 738-743.

\bibitem{Ka} V. Kanellopoulos, \textit{Ramsey families of subtrees of the
dyadic tree}, Trans. Amer. Math. Soc., 357 (2005), 3865-3886.

\bibitem{Kechris} A. S. Kechris, \textit{Classical Descriptive Set Theory},
Grad. Texts in Math., 156, Springer-Verlag, 1995.

\bibitem{LT} J. Lindenstrauss and L. Tzafriri, \textit{Classical Banach spaces
vol. I: Sequence spaces}, Ergebnisse, 92, Springer, 1977.

\bibitem{Ma} A. R. D. Mathias, \textit{Happy families}, Annals Math. Logic, 12 (1977), 59-111.

\bibitem{OR} E. Odell and H. P. Rosenthal, \textit{A double-dual characterization
of separable Banach spaces not containing $\ell_1$}, Israel J. Math., 20 (1975), 375-384.

\bibitem{Pe} A. Pe{\l}czy\'{n}ski, \textit{Projections in certain Banach spaces},
Studia Math., 32 (1960), 209-228.

\bibitem{Ro1} H. P. Rosenthal, \textit{On factors of $C([0,1])$ with non-separable dual},
Israel J. Math., 13 (1972), 361-378.

\bibitem{Ro2} H. P. Rosenthal, \textit{A characterization of Banach spaces
containing $\ell_1$}, Proc. Nat. Acad. Sci. USA, 71 (1974), 2411-2413.

\bibitem{Ro3} H. P. Rosenthal, \textit{Pointwise compact subsets of the first
Baire class}, Amer. J. Math., 99 (1977), 362-378.

\bibitem{Ro4} H. P. Rosenthal, \textit{Some remarks concerning unconditional basic
sequences}, Longhorn Notes, University of Texas, Texas Functional Analysis Seminar
1982-83, 1983, 15-47.

\bibitem{Ro5} H. P. Rosenthal, \textit{The Banach spaces $C(K)$}, Handbook of the
Geometry of Banach Spaces, Volume 2, Edited by W. B. Johnson and J. Lindenstrauss,
Elsevier, 2003.

\bibitem{To} S. Todorcevic, \textit{Analytic gaps}, Fund. Math., 150 (1996), 55-66.

\end{thebibliography}
\end{document}